\newtheorem{lemma}{Lemma}
\newtheorem{thm}{Theorem}
\newtheorem{cor}[thm]{Corollary}
\newtheorem{rem}[thm]{Remark}
\title{Lie derivations on the algebras of locally measurable  operators}
\keywords{von Neumann algebras, locally measurable operator,
derivation,  Lie derivation, center-valued trace.}
\subjclass[2010]{46L52 (primary), 16W25, 47B47, 47L60 (secondary)}
\begin{document}
\date{August 09, 2016}

\begin{abstract}
We prove that every Lie derivation on a solid $*$ - subalgebras of
locally measurable operators  it is
equal to a sum of the  associative derivation and the
center-valued trace.
\end{abstract}

\author{VLADIMIR CHILIN}
\address{National University of Uzbekistan, 100174, \ Tashkent, Uzbekistan}
\email{vladimirchil@gmail.com; chilin@ucd.uz}
\author{ILKHOM JURAEV}
\address{Bukhara State University, 100200, \
Bukhara, Uzbekistan}
\email{ ijmo64@mail.ru}

\maketitle

\section{Introduction}

 Let $A$ be an arbitrary associative algebra and $Z(A)$ be
 the center of the algebra $A.$ A linear operator $D: A\to A$ is called
 (an associative)   derivation if it satisfies the identity
$D(xy)=D(x)y+xD(y)$ for all $x,y \in A$ (Leibniz rule). For any
derivation $D: A\to A$ the inclusion $D(Z(A))\subset Z(A)$ holds.
Every element $a\in A$ defines  a derivation $D_a$ on $A$ given
by $D_a(x)=ax-xa=[a,x],$ $x\in A$. This derivation $D_a$ is said
to be the inner derivation.

A linear operator $L: A\to A$ is called the \textit{Lie
derivation}, if $L\left(\left[x,y\right]\right) =\left[
L(x),y\right]+\left[ x,L(y)\right]$ for all $ x,y \in A$. \ It is
obvious that every an associative derivation $D: A\to A$ is a Lie
derivation. The example of non-associative Lie derivations is the
center-valued trace $E:A\to Z(A)$, i.e. a linear map $E:A\to Z(A)$
such that $E(xy)=E(yx)$  for all $x,y\in A.$

It is well known that any Lie derivation of $L$ on a $C^{*}$ -
algebra $A$ can be uniquely represented in the form $L=D+E$, where
$D$ is ( an associative) derivation and $E$ is a center-valued
trace on $A$ \cite{MV}. Such representation of the Lie derivation $L$
is called the standard form of $L$. In  case when $A$ is a von
Neumann algebra the standard form of the Lie derivation $L:
A\rightarrow A$ has the form $L=D_{a}+E$ for some $a\in A$
\cite{M1}.

Development of the theory of algebras of measurable operators
$S(M)$ and of algebras of locally measurable operators  $LS(M)$
affiliated with  von Neumann algebras or $AW^{*}$ algebras $M$ (see for example
\cite{B},  \cite{MC}, \cite{S1}, \cite{S2}, \cite{San}, \cite{Seg}, \cite{Y}) provided an opportunity to construct and to study new meaning examples of $\ast$ -algebras of
unbounded operators.

One of the interesting problem  is to describe all derivations which act in the algebras  \ $S(M)$ \ and \ $LS(M)$. In case when $M$ is a commutative von Neumann algebra
the equality $S(M)=LS(M)$ holds. In \cite{BCS1} it is proved that any
derivation  in $S(M)$ is inner i.e. trivial, if and only if $M$ is
an atomic algebra. For a commutative $AW^{*}$ - algebra $M$ the
criterion for the existence of nonzero derivations in $S(M)$ is
the lack of the $\sigma$-distributive property of Boolean algebra
of all projections of $M$ \cite{K}.

In case of type I von Neumann algebra    all derivations on the
algebras $LS(M)$ and $S(M)$ are described in \cite{AAK}. In case when $M$
is a properly infinite von Neumann algebra it is proved in \cite{BCS2}
that any derivation on $LS(M)$ and $S(M)$ are inner.

Following the approach of \cite{M1}, in this paper we present  the standard
form of the Lie derivation acting on an arbitrarily solid  $*$-subalgebras of $LS(M)$,
which contains $M.$

We use terminology and notations from the von Neumann algebra
theory \ \cite{D}, \ \cite{S}, \ \cite{SZ} \ and the theory of
locally measurable operators \ \cite{MC}, \ \cite{San}, \
\cite{Y}.

\section{Preliminaries}

Let $H$ be a complex Hilbert space over the field $\mathbb{C}$ of
all complex numbers and let $B(H)$   be the algebra of all bounded
linear operators on $H$. Let  $M$ be a von Neumann subalgebra  in
$B(H)$ \ and \ let \ $\mathcal{P}(M)$ \ be  the
lattice of all projections in $M,$ \ i. e. $\mathcal{P}(M)=\{p\in M:
p^{2}=p=p^{*}\}$. \ Denote by $\mathcal{P}_{fin}(M)$  the
 sublattice of all  finite projections of $\mathcal{ P}(M).$ \ Let $Z(M)$ be
 the center of algebra $M$ and  $\mathbf{1}$  be the identity
in $M$.

A linear subspace $\mathcal{D}$ in $H$  is said to be
\textit{affiliated} with $M$ (denoted as $\mathcal{D} \ \eta \
M$), if $u(\mathcal{D}) \subseteq \mathcal{D}$ for every unitary operator \
$u$ \ from the commutant
$$
M'=\left\{ y\in B(H): xy=yx, \ \ \ \text{for all} \ \ \ x\in M \right\}
$$
of the von Neumann algebra $M.$

A linear subspace $\mathcal{D}$ in  $H$ is said to be strongly
dense in $H$ with respect to the von Neumann algebra $M,$ if
$\mathcal{D} \ \eta \ M$ and there exists a sequence of
projections $\left\{ p_n \right\}_{n=1}^\infty\subset\mathcal{
P}(M),$ such that $p_n \uparrow \mathbf1,$ $p_n(H) \subset
\mathcal{D}$ \ and  \ $p_n^\perp:=\mathbf1-p_n \in \mathcal{P}_{fin}(M)$
\ for all $n\in \mathbb N,$ where $\mathbb N$
denotes the set of all natural numbers.

A linear operator $x$ on $H$ with a dense domain $\mathcal{D}(x)$
is said to be \textit{affiliated} with $M$ (denoted as $x \ \eta \
M$) if $\mathcal{D}(x) \ \eta \ M$ and $ux(\xi)=xu(\xi)$ for all \
$\xi\in \mathcal{D}(x)$ \ and for every unitary operator $u\in
M^{'}$.

A closed linear operator $x$ acting in the Hilbert space $H$ is
said to be measurable with respect to the von Neumann algebra $M$,
if $x \ \eta \ M$ and $\mathcal{D}(x)$ is strongly dense in $H$.
By $S(M)$ we denote the set of all measurable operators with
respect to $M$.

The set $S(M)$ is a  a unital $\ast$-algebra  with respect algebraic operations of strong addition and multiplication and taking the adjoint of an operator (it is assumed that the multiplication by a scalar defined as usual wherein  $0\cdot x=0$) \
\cite{Seg}.

A closed linear operator $x$ in $H$ is said to be \textit{ locally
measurable with respect to the von Neumann algebra } $M$, if $x \
\eta \ M$ and there exists a sequence  $\left\{ z_n
\right\}_{n=1}^\infty$ \  of central projections in $M$ such that
$z_n \uparrow \mathbf 1$ \ and \ $xz_n\in S(M)$  for all
$n\in\mathbb N$.

The set $LS(M)$ of all locally measurable operators with respect
to $M$ also is a unital $\ast$-algebra equipped with the algebraic
operations of strong addition and multiplication and taking the
adjoint of an operator. In this case, $S(M)$ and $M$ is a $\ast$-subalgebras in $LS(M)$ \cite[Ch. II, \S 2.3]{MC}. Center $Z(LS(M))$ in
the $\ast$-algebra $LS(M)$ coincides with the $\ast$-algebra $S(Z(M)).$ In
the case when $M$ is a factor or $M$ is finite von Neumann algebra, the equality $LS(M)=S(M)$ holds.

Let $x$ be a closed operator with the dense domain $\mathfrak{D}(x)$
in $H$, let $x=u|x|$ be the polar decomposition of the operator
$x$, where $|x|=(x^*x)^{\frac{1}{2}}$ and $u$ is a  partial
isometry in $B(H)$ such that $u^*u$ is the right support $r(x)$ of
$x$. It is known that $x\in LS(\mathcal{M})$ (respectively, $x\in
S(\mathcal{M})$) if and only if $|x|\in LS(\mathcal{M})$
(respectively, $|x|\in S(\mathcal{M})$) and $u\in
\mathcal{M}$~\cite[Ch. II, \S\S\,2.2, 2.3]{MC}.

A $\ast$-subalgebra $A$ in $LS(M)$ is called the solid $\ast$-subalgebra of  $LS(M)$,
 if $MAM \subset A$, i.e. $axb\in A$ for any $a,b\in M$ and $x\in  A$. It is known  that a $\ast$-subalgebra \ $A$ \ of \ $LS(M)$ \
is  solid if and only if the  condition $x\in LS(M)$, $y\in A,$ $|x|\leq
|y|$ implies that $x\in A$ \ (see for example, \cite{BPS}).
The examples of  solid $\ast$-subalgebras of $LS(M)$ are  $\ast$-subalgebras  $M$ and $S(M)$.

\section{Standard decomposition of Lie derivations}

In this section the standard decomposition of a Lie derivation
acting in a solid  $*$-subalgebra $A$ in $LS(M)$ containing $M$ is
established. More precisely, we have the following

\begin{thm}
\label{thm1}
Let $A$ be a solid  $*$- subalgebra in $LS (M)$ containing $M,$
and $L$ be a Lie derivation on $A.$ Then there exist (associative)
derivation $D:A\rightarrow A$ and center-valued trace  $E: A\to
Z(A)$ such that
$$L(x)=D(x)+E(x) \eqno (3.1)$$ for all $x\in A$.
\end{thm}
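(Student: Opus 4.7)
The plan is to adapt the Miers--Mathieu--Villena approach \cite{M1} to the solid $*$-subalgebra setting. The first step is a central reduction. For any central projection $z \in Z(M) \subseteq A$, the identity $[z, x] = 0$ combined with the Lie derivation property gives $[L(z), x] = 0$ for every $x \in A$, so $L(z) \in Z(A)$. The map $L_z(x) := zL(x)$ is again a Lie derivation on $A$, and it restricts to a Lie derivation on the solid $*$-subalgebra $zA \subseteq LS(zM)$, which contains $zM$ because $M \subseteq A$. Using the type decomposition of $M$ via central projections, we reduce to two cases: (i) $M$ is abelian, and (ii) $M$ has no abelian central summand. A direct sum of associative derivations (resp. center-valued traces) is again an associative derivation (resp. center-valued trace), so decompositions on the central summands glue together to give (3.1).

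In the abelian case we have $S(M)=LS(M)$, and $A \subseteq S(M)$ is commutative; every commutator vanishes and the Lie derivation identity is vacuous. Since $Z(A) = A$, the map $E := L$ is itself a center-valued trace ($E(xy) = L(xy) = L(yx) = E(yx)$), and we take $D = 0$.

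For the non-abelian case we exploit a halving projection $p \in \mathcal{P}(M)$ with $p \sim \mathbf{1} - p$, available because $M$ has no abelian central summand. Choose a partial isometry $v \in M$ with $v^*v = p$ and $vv^* = \mathbf{1} - p$. Setting $e_1 = p$ and $e_2 = \mathbf{1} - p$, we consider the Peirce decomposition $A = \bigoplus_{i,j=1}^{2} e_i A e_j$. Applying the Lie identity to commutators of the form $[p, x]$, $[v, x]$, $[v^*, x]$ yields formulas for the diagonal and off-diagonal Peirce components of $L(x)$ in terms of $L(p)$, $L(v)$, $L(v^*)$ and the Peirce components of $x$. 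From these one extracts an element $a \in A$, assembled from the diagonal components of $L(p)$ and $L(v)$, so that the inner derivation $D_a(x) = [a, x]$ cancels the non-central part of $L$ on every commutator. Setting $E := L - D_a$ and verifying that $E$ lands in $Z(A)$ and satisfies $E(xy) = E(yx)$ completes the construction.

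The principal technical obstacle lies in this last verification: after subtracting $D_a$, one must show that the remainder is center-valued and satisfies the trace identity. In \cite{M1} this step uses boundedness and weak-topology arguments available inside a von Neumann algebra; in $LS(M)$ only the algebraic structure is at our disposal, so those arguments must be replaced by purely algebraic commutator manipulations. The solidity hypothesis $MAM \subseteq A$ and the inclusion $M \subseteq A$ are essential here, both to guarantee that the chosen $a$ lies in $A$ and to ensure $[a, A] \subseteq A$, so that $D_a$ genuinely maps $A$ into itself. Note that, unlike the von Neumann case, $D$ need not be inner in $M$; the element $a$ may lie in $A \setminus M$.
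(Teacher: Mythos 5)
Your central reduction and the abelian case are sound in outline (though when you ``glue'' over central summands you should note that $L$ need not commute with multiplication by central projections, so the cross terms $z_0L(z_1x)$ and $z_1L(z_0x)$ must separately be shown to be center-valued traces, as the paper does in Lemma \ref{lemma10}). The serious gap is in the non-abelian case: your plan to produce the associative part as an \emph{inner} derivation, extracting a single $a\in A$ and setting $E:=L-D_a$, cannot succeed. Take $M=M_2(\mathbb C)\otimes Z$ with $Z$ an atomless commutative von Neumann algebra, $A=LS(M)\cong Mat(2,S(Z))$, and let $L=D_\delta$ be the associative (hence Lie) derivation acting entrywise by a nonzero derivation $\delta$ of $S(Z)$ (such $\delta$ exist by \cite{Ber}). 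If $L-D_a$ were center-valued for some $a\in A$, it would be a center-valued associative derivation of $Mat(2,S(Z))$, and a short matrix-unit computation (e.g. $E(e_{11})=2E(e_{11})e_{11}$ forces $E(e_{11})=0$, and so on) shows every such derivation vanishes; hence $D_\delta=D_a$ would be inner, contradicting $D_\delta\neq0$ on the center. This is precisely where $LS(M)$ diverges from Miers' von Neumann algebra setting: the paper uses the inner derivation $D_a$ only to reduce to the case $L(p)\in Z(A)$ (Lemma \ref{lemma3}) and then builds $D$ piecewise from the Peirce components via $D(x)=d(x_{11})+d(x_{22})+L(x_{12}+x_{21})$, a derivation which in general is not inner (cf.\ Corollary \ref{cor2}).

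A second, independent error: a halving projection $p\sim\mathbf 1-p$ need not exist when $M$ merely has no abelian central summand. For $M=M_3(\mathbb C)$, or any type $I_n$ summand with $n$ odd, no projection is equivalent to its complement, so the partial isometry $v$ with $v^*v=p$, $vv^*=\mathbf 1-p$ on which your matrix-unit formulas rely is unavailable. What does exist, and what the paper actually uses, is a projection with $c(p)=c(\mathbf 1-p)=\mathbf 1$ and only the subequivalence $p\preceq\mathbf 1-p$; that weaker property is enough for the key nondegeneracy statement (Lemma \ref{lemma1}), but your argument would have to be rewritten to avoid invoking $v$ as a global matrix unit.
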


\begin{rem}\label{r1} In the case of algebras $LS(M)$  and $S(M)$  the standard decomposition
 (3.1) for the Lie derivation $L$ in general is not unique.    For example, if $M$ is a commutative von Neumann algebra without atoms, there are uncountably many distinct associative
derivations  $D:S(M)\rightarrow S(M)$ \cite{BCS1}, \cite{Ber},  which are at the
same time $S(M)$ - valued traces. Therefore, taking two different
nonzero derivations $D$ and $D_{1}$ on $S (M),$ we have that for a
zero Lie derivation the equality
$$L=D+(-D)=D_{1}+(-D_{1})
$$
holds,  i. e. the standard decomposition for $L$
is not unique.
\end{rem}

\begin{rem}\label{r2} If $M$ is  a  commutative von Neumann algebra, then
the $\ast$-algebra $LS(M)$ is also commutative \cite[Ch. II, \S 2.2]{MC},
and therefore for any $*$ - subalgebra $A$ in $LS (M),$ we have
that $Z (A) = A.$ Hence, in this case, the class of Lie
derivations on $A$ coincides with the class \ $Z(A)$-valued trace
on $A$.
\end{rem}

For the prove Theorem \ref{thm1}, we first consider the case when the von
Neumann algebra $M$ has no direct commutative summands, i.e. for any
nonzero central projection $z\in\mathcal{P}(Z(M))$ the von Neumann
algebra $zM$ is not commutative. In this case, in the von Neumann
algebra $M$, there exists a non-zero projection $p$ such that
$$
c(p)=c(\mathbf{1}-p) = \mathbf{1} \ \ \text{and} \ \ \ p \preceq \mathbf{1} - p,
$$
where \
$c(p):=\mathbf{1}-\sup\{z\in \mathcal{P}(Z(M)):pz=0\}$ \ is the
central support of projection \ $p$ \ ~\cite[Task 6.1.9]{KR}.

Let $A$ be an   arbitrary solid $\ast$-subalgebra in $LS(M)$ such
that \ $M\subset A$. Let $p_{1}=p$, $p_{2}=\mathbf{1}-p.$ Consider
 subalgebras $S_{ij}=p_iAp_j= \left\{ p_ixp_j: x\in A\right\}$ \
in \ $A, \ i,j=1,2$. It is clear that $S_{ik}S_{lj}\subset S_{ij}$
\ and the inclusion \ $M\subset A$ implies that
$p_{i}Mp_{j}\subset S_{ij}$ for any $i,j,k,l=1,2$. In this case,
taking into account the  equality
$$
x=p_{1}xp_{1}+p_{1}xp_{2}+p_{2}xp_{1}+p_{2}xp_{2}, \ \  x \in A,
$$
we obtain that

$$
A=\sum\limits_{i,j=1,2} p_i Ap_j.
$$

Moreover, for \  $x\in S_{ik}$, \ $y\in S_{lj}$ \ the inclusion \
$xy \in S _{ij}$\ holds, and if $k\neq l$, then $xy=0$, \  i.e. in
this case, \ $S_{ik} S_{lj}={0}$.

For the prove Theorem \ref{thm1} we need some technical lemmas,
which are similar to the corresponding lemma in ~\cite{Mar},
~\cite{M1}. The proof of these lemmas is similar to those in
~\cite{Mar}, ~\cite{M1} and are given here for the sake of
completeness  (in contrast with ~\cite{Mar}, ~\cite{M1} we also consider the
case when $A\neq M$).

\begin{lemma}
\label{lemma1} (see ~\cite[Lemma 1]{M1}).  If $x\in S_{ij}$ and
$xy=0$ for all $y\in S_{jk}$, \ then $x=0$.
\end{lemma}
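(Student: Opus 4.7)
The plan is to test the hypothesis $xy=0$ against elements of the specific form $y=p_jmp_k$ for $m\in M$. Such $y$ lies in $S_{jk}$ because $M\subset A$, and this family is rich enough---thanks to the central-support normalisation $c(p_1)=c(p_2)=\mathbf{1}$ built into the choice of $p$---to force $x=0$.

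Concretely, since $x\in S_{ij}=p_iAp_j$ one has $xp_j=x$, so for every $m\in M$,
$$
0=x(p_jmp_k)=xmp_k.
$$
Specialising $m$ to an arbitrary unitary $u\in M$ and right-multiplying by $u^*$ yields $x(up_ku^*)=0$. The projections $up_ku^*$, as $u$ ranges over the unitary group of $M$, form a family whose lattice supremum is invariant under conjugation by unitaries of $M$ and is therefore central; being manifestly the smallest such central projection dominating $p_k$, it coincides with $c(p_k)$, and by our choice of $p$ this equals $\mathbf{1}$ for both $k=1$ and $k=2$.

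The final step converts $x(up_ku^*)=0$ for a family of projections with supremum $\mathbf{1}$ into $x=0$. For this I would invoke the standard fact from the theory of $LS(M)$: if $e\in\mathcal{P}(M)$ and $x\in LS(M)$, then $xe=0$ is equivalent to $e\le\mathbf{1}-r(x)$, where $r(x)$ denotes the right support of $x$ introduced via the polar decomposition recalled in the preliminaries. Since each $up_ku^*\le\mathbf{1}-r(x)$ and this bound is preserved by passing to the lattice supremum in $\mathcal{P}(M)$, we obtain $\mathbf{1}\le\mathbf{1}-r(x)$, hence $r(x)=0$ and $x=0$.

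The only step that is not purely algebraic is the last one, where one uses the support-theoretic behaviour of possibly unbounded elements of $LS(M)$ (the bounded-operator analogue would be immediate, since the closed span of the ranges of the $up_ku^*$ would be a dense $x$-null subspace); the remainder is a formal calculation using only $M\subset A$ together with the central-support normalisation of $p$.
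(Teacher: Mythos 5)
Your proof is correct, but it takes a genuinely different route from the paper's. The paper exploits the comparison $p_1=p\preceq \mathbf{1}-p=p_2$ built into the choice of $p$: it picks a single partial isometry $u\in M$ with $u^*u=p_1$, $uu^*=q_1\le p_2$, and recovers $x$ in one algebraic step as $x=xp_1u^*u=x\,(p_1u^*q_1p_2)\,u=0$, testing the hypothesis against just one element of $S_{12}$. You instead test against the whole unitary orbit $\{up_ku^*\}$, identify its lattice supremum with the central support $c(p_k)=\mathbf{1}$, and kill the right support of $x$ in $LS(M)$. What your argument buys: it is uniform in all index cases $(i,j,k)$ (the paper's "for other indices the proof is similar" is actually not symmetric --- for $j=2$, $k=1$ one cannot reuse the same $u$, since $p_2$ need not be subequivalent to $p_1$, and an exhaustion or supremum argument of exactly your kind is needed there anyway), and it uses only $c(p_1)=c(p_2)=\mathbf{1}$, which follows automatically from the hypothesis $c(p)=c(\mathbf{1}-p)$ of Lemma~\ref{lemma3} where Lemma~\ref{lemma1} is invoked, without ever needing $p\preceq\mathbf{1}-p$. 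What it costs: you must import two non-algebraic facts --- that $\sup_u up_ku^*=c(p_k)$ and that for $x\in LS(M)$ and $e\in\mathcal{P}(M)$ one has $xe=0$ iff $e\le\mathbf{1}-r(x)$, with the bound preserved under arbitrary suprema in the complete lattice $\mathcal{P}(M)$. Both are standard (the second is part of the support calculus for locally measurable operators recalled via the polar decomposition in the preliminaries), so the proof is complete; it is simply less elementary per step and more robust globally than the paper's.
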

\begin{proof} If $j=k$, then for \ $p_{j}\in S_{jj}$
\ we have, that \ $x=xp_{j}= 0$. \
 Let $i=j=1,k=2$, \ $x\in S_{11}$ \ and  \ $xy=0$ \ for all \ $y\in S_{12}$.
  \ Since $p_1 = p \preceq \mathbf{1} - p = p_2$   \ there exists  projection \
 $q_{1}\leq p_{2}$ \ such that  \ $p_{1}\sim q_{1}$, \
i.e. \ $u^{*}u=p_{1}, \ uu^{*}=q_{1}$ \ for some partial isometry
 \ $u$ \ from \ $M$. Taking into account \
$u^{*}q_{1}u=p_{1}, \ q_{1}=q_{1}p_{2}, \ x=p_{1}xp_{1}$ \ and
inclusion\ $M\subset A$, \ we have, that \
$p_{1}u^{*}q_{1}p_{2}\in S_{12}$ �
$$
x=p_{1}xp_{1}=p_{1}xp_{1}u^{*}u=x(p_{1}u^{*}q_{1}p_{2})u=0.
$$ For other indices $i,j$ the proof of the lemma is similar.

\end{proof}

\begin{lemma}
\label{lemma2} Let \ $p\in \mathcal{P}(M)$ \ and \ let \ $L:
A\rightarrow A$ \ be a Lie derivation then
$$
x( pL\left( p\right)+L\left( p\right) p+pL\left( p\right)p
-L\left( p\right))-( pL\left( p\right)+L\left( p\right)p+pL\left(
p\right)p-L\left( p\right))x=
$$
$$
= 3px( pL\left( p\right)+L\left(
p\right)p-L\left( p\right))-3( pL\left( p\right)+L\left(
p\right)p-L\left( p \right)) xp \eqno (3.2)
$$
for every $x\in A$.
\end{lemma}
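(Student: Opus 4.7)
The plan is to derive from the Lie derivation axiom a purely algebraic relation between $a := L(p)$, $p$, and $x$, and then recognise it as a rearrangement of (3.2). Setting $T = pa + ap + pap - a$ and $S = pa + ap - a$, the identity (3.2) reads $[x,T] = 3(pxS - Sxp)$, so my target is this bracket identity.

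First I would record two elementary consequences of $p^2 = p$: for every $y \in A$,
$$[p,[p,y]] = py - 2pyp + yp \quad \text{and} \quad [p,[p,[p,y]]] = [p,y].$$
Next, iterating the Lie derivation axiom $L([u,v]) = [L(u),v] + [u,L(v)]$ three times starting from $L([p,x])$ gives
$$L([p,x]) = [a,x] + [p,L(x)]$$
and, after two further applications,
$$L([p,[p,[p,x]]]) = [a,[p,[p,x]]] + [p,[a,[p,x]]] + [p,[p,[a,x]]] + [p,[p,[p,L(x)]]].$$
Because $[p,[p,[p,x]]] = [p,x]$, the left-hand side of the second display equals $[a,x] + [p,L(x)]$; because $[p,[p,[p,L(x)]]] = [p,L(x)]$, the last term on the right cancels $[p,L(x)]$. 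One is left with the purely algebraic identity
$$[a,x] = [a,[p,[p,x]]] + [p,[a,[p,x]]] + [p,[p,[a,x]]]. \qquad (\star)$$

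Finally I would expand both sides of $(\star)$ using only $p^2 = p$, collect the resulting monomials in $p,a,x$, and compare with the parallel expansion of $[x,T] - 3(pxS - Sxp)$. Each of these expansions produces a signed sum of exactly the twelve monomials $apx$, $apxp$, $axp$, $pax$, $paxp$, $papx$, $pxa$, $pxap$, $pxpa$, $xap$, $xpa$, $xpap$, with coefficients in $\{\pm 1,\pm 3\}$, and a direct term-by-term check shows that $(\star)$ and (3.2) carry the same coefficient on each monomial. The only real obstacle is this bookkeeping in the last step; no additional structure of $A$ is required beyond the Lie derivation axiom and the idempotence of $p$.
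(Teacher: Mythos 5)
Your proof is correct and is essentially the argument the paper relies on: the paper defers to Martindale's Lemma~4, whose proof is exactly this application of the Lie derivation rule to the idempotent identity $[p,[p,[p,x]]]=[p,x]$, cancellation of the $[p,L(x)]$ terms, and a monomial-by-monomial comparison with (3.2). I checked the expansion of $(\star)$ against $xT-Tx-3pxS+3Sxp$ and the coefficients do match, so the bookkeeping step goes through as claimed.
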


The proof is exactly the same as the proof of Lemma 4 in \
\cite{Mar}.

\begin{lemma}
\label{lemma3} (see \ \cite[Lemma 5]{Mar},
 \ \cite[Lemma 4]{M1}).
 Let \ $p\in \mathcal{P}(M)$, \ $c(p)=c(\mathbf{1}-p)$ \ and let \ $L:A\rightarrow A$ \ be a
Lie derivation. Then \ $L(p)=[p,a]+z$ \ for some \  $a\in A$ \ and \ $z\in Z(A).$
\end{lemma}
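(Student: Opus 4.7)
My plan is to compute the Peirce decomposition of $L(p)$ relative to $p$, extract commutation relations for its blocks from identity $(3.2)$ of Lemma~\ref{lemma2}, and then read off $a$ and $z$ directly. Set $p_1=p$, $p_2=\mathbf{1}-p$, $S_{ij}=p_iAp_j$, and decompose $L(p)=b_{11}+b_{12}+b_{21}+b_{22}$ with $b_{ij}:=p_iL(p)p_j\in S_{ij}$. Defining $T:=pL(p)+L(p)p+pL(p)p-L(p)$ and $S:=pL(p)+L(p)p-L(p)$, a short block calculation shows $T=2b_{11}-b_{22}$ and $S=b_{11}-b_{22}$, so $(3.2)$ reads $xT-Tx=3\,pxS-3\,Sxp$.

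I would then substitute $x\in S_{ij}$ separately for each of the four index pairs. For $x\in S_{11}$ almost all cross products vanish and the identity collapses to $[b_{11},x]=0$; symmetrically $[b_{22},x]=0$ for $x\in S_{22}$. For $x\in S_{12}$ it reduces to $b_{11}x=xb_{22}$, and for $x\in S_{21}$ to $xb_{11}=b_{22}x$. These four relations are exactly what is needed.

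Finally, set $a:=b_{12}-b_{21}$ and $z:=b_{11}+b_{22}$, both of which lie in $A$. A direct Peirce computation gives $[p,a]=b_{12}+b_{21}$, hence $L(p)-[p,a]=z$. Expanding $zy-yz$ for an arbitrary $y=\sum_{i,j}y_{ij}\in A$, the four surviving pieces $[b_{11},y_{11}]$, $[b_{22},y_{22}]$, $b_{11}y_{12}-y_{12}b_{22}$, $b_{22}y_{21}-y_{21}b_{11}$ each vanish by one of the four identities above, so $z\in Z(A)$, yielding the claimed decomposition.

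The only real obstacle is the careful Peirce bookkeeping when substituting into $(3.2)$: for each choice of $x\in S_{ij}$ one must correctly track which of the products $b_{kk}x$, $xb_{kk}$, $pxS$, $Sxp$ survive, since almost all of them are zero for reasons of block orthogonality. Everything else follows mechanically from Lemma~\ref{lemma2}. I note that the hypothesis $c(p)=c(\mathbf{1}-p)$ is not actually invoked in the algebraic argument itself; it is there to guarantee the non-degeneracy of the Peirce decomposition needed in the surrounding theorem.
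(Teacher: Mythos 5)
Your proposal is correct, and it follows the same overall strategy as the paper: Peirce decomposition of $L(p)$ with respect to $p_1=p$, $p_2=\mathbf{1}-p$, reduction of identity $(3.2)$ to $[x,2b_{11}-b_{22}]=3pxS-3Sxp$ with $S=b_{11}-b_{22}$ (the paper's $(3.4)$), and the same final choice $a=b_{12}-b_{21}$, $z=b_{11}+b_{22}$. The one genuine difference lies in how the diagonal relations are obtained. For $x\in S_{11}$ the paper does \emph{not} substitute directly; it first proves $b_{11}y=yb_{22}$ for $y\in S_{12}$, deduces $(b_{11}x-xb_{11})y=0$ for all $y\in S_{12}$, and then invokes Lemma~\ref{lemma1} (whose proof uses $p\preceq\mathbf{1}-p$, hence the standing choice of $p$) to kill $b_{11}x-xb_{11}\in S_{11}$. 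You instead observe that direct substitution of $x\in S_{11}$ into $(3.4)$ already yields $2[x,b_{11}]=3[x,b_{11}]$, hence $[x,b_{11}]=0$, and similarly $[x,b_{22}]=0$ for $x\in S_{22}$; this computation checks out ($xb_{22}=b_{22}x=0$ and $px=xp=x$ for $x\in S_{11}$). Your route is therefore a little shorter and, as you note, dispenses with Lemma~\ref{lemma1} and with the hypothesis $c(p)=c(\mathbf{1}-p)$ for this particular lemma; the paper's detour buys nothing here beyond fidelity to Martindale's original argument. Your closing verification that $z\in Z(A)$ by checking $[z,y_{ij}]=0$ block by block, and that $[p,a]=b_{12}+b_{21}$, matches the paper and is complete.
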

\begin{proof} Let \ $p_{1}=p, \ p_{2}=\mathbf{1}-p$, \
$S_{ij}=p_{i}Ap_{j}$. \ Since \ $A=\sum\limits_{i,j=1,2}S_{ij}$, \
then \ $L(p)=\sum\limits_{i,j=1,2} x_{ij}$, \ where \  $x_{ij}\in
S_{ij}$. \
 Applying (3.2) for \ $x\in A$ \ we obtain that
$$x(p(x_{11}+x_{12}+x_{21}+x_{22})+(x_{11}+x_{12}+x_{21}+x_{22})p+$$
$$+p(x_{11}+x_{12}+x_{21}+x_{22})p-(x_{11}+x_{12}+x_{21}+x_{22}))-$$
$$(p(x_{11}+x_{12}+x_{21}+x_{22})+(x_{11}+x_{12}+x_{21}+x_{22})p+$$
$$+p(x_{11}+x_{12}+x_{21}+x_{22})p-(x_{11}+x_{12}+x_{21}+x_{22}))x=   \eqno (3.3)$$
$$=3px(p(x_{11}+x_{12}+x_{21}+x_{22})+(x_{11}+x_{12}+x_{21}+x_{22})p-$$
$$(x_{11}+x_{12}+x_{21}+x_{22}))-3(p(x_{11}+x_{12}+x_{21}+x_{22})+$$
$$(x_{11}+x_{12}+x_{21}+x_{22})p-(x_{11}+x_{12}+x_{21}+x_{22}))xp.$$
Since \ $x_{ij}\in  S_{ij}$, \ $i,j=1,2$, \ it follows that\
$x_{ij}=p_{i}x_{ij}p_{j}$. \ In particular,
$$
px_{1j}=x_{1j}, \ x_{i1}p=x_{i1}, \  px_{11}p= x_{11}, \ px_{ij}p=0
$$
��� $i\neq1$, ���� $j\neq1$. \ Thus, equality (3.3) implies that
$$x(x_{11}+x_{12}+x_{11}+x_{21}+x_{11}-x_{11}-x_{12}-x_{21}-x_{22})-$$
$$-(x_{11}+x_{12}+x_{11}+x_{21}+x_{11}-x_{11}-x_{12}-x_{21}-x_{22})x=$$
$$=3px(x_{11}+x_{12}+x_{11}+x_{21}-x_{11}-x_{12}-x_{21}-x_{22})-$$
$$-3(x_{11}+x_{12}+x_{11}+x_{21}-x_{11}-x_{12}-x_{21}-x_{22})xp.$$
Thus
$$
x(2x_{11}-x_{22})-(2x_{11}-x_{22})x=3px(x_{11}-x_{22})-
3(x_{11}-x_{22})xp. \eqno (3.4)
$$
If \ $x\in S_{12}$, \ then \ $x_{22}x=xx_{11}=0$ \ and \ $xp=0$, \
equality (3.4) implies that \ $x_{11}x=xx_{22}$. \ Hence,
$$
(x_{11}+x_{22})x=x(x_{11}+x_{22}).
$$
Similarly, for\ $x\in S_{21}$ \ we have that
$$
(x_{11}+x_{22})x=x(x_{11}+x_{22}).
$$
 Now let \ $x\in S_{11}$ \ and \ $y\in
S_{12}$. Since \ $xy\in S_{12}$ it follows that
$$
(x_{11}x-xx_{11})y = (x_{11}+x_{22})xy-x(x_{11}+x_{22})y=
$$
$$=(x_{11}+x_{22})xy-xy(x_{11}+x_{22})=
(x_{11}+x_{22})xy-(x_{11}+x_{22})xy=0,
$$
i. e. \ $ (x_{11}x-xx_{11}) y =0$ \ for all \ $x\in S_{11}$ \ and
\ $y\in
 S_{12}$. \ Since $x_{11}x-xx_{11}\in S_{11}$, it follows from Lemma \ref{lemma1} that
$$
(x_{11}+x_{22})x-x(x_{11}+x_{22})=0
$$
for all $x\in S_{11}$.

Similarly, in the case of \ $x\in S_{22}$,
\ we have that
$$
(x_{11}+x_{22})x-x(x_{11}+x_{22})=0.
$$
Thus, \
$$(x_{11}+x_{22})b=b(x_{11}+x_{22})
$$
for all \ $b\in S_{ij}$, \ where \ $i,j=1,2$, \ i.e. \
$x_{11}+x_{22} = z$ \ for some \ $z\in Z(A)$. \ Hence, \
$L(p)=(x_{12}+x_{21})+z$. \ Setting  \ $a=x_{12}-x_{21}$ \
we obtain that   \ $pa-ap=x_{12}+x_{21}$, \ and therefore \
$L(p)=[p,a]+z$.
\end{proof}

\begin{lemma}
\label{lemma4} (see \ \cite[Lemma 6]{Mar}). If \ $p$ \ and \ $L$ \
are the same as in Lemma \ref{lemma3} \ and\ $L(p)\in Z(A)$, \
then  \ $L\left(  S_{ij}\right) \subset S_{ij}$ \ for \ $i\ne j$.
\end{lemma}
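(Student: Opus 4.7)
The plan is to exploit the hypothesis $L(p) \in Z(A)$ to collapse the Lie derivation identity on commutators with $p$ into a very clean form. Since $L(p)$ is central, $[L(p),y] = 0$ for every $y \in A$, and applying the defining identity of the Lie derivation to $[p,y]$ gives
\[
L([p,y]) = [L(p),y] + [p,L(y)] = [p,L(y)].
\]
The whole proof then reduces to plugging $y \in S_{ij}$ into this identity and comparing Peirce components.

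For $y \in S_{12}$ one has $py = y$ and $yp = 0$, hence $[p,y] = y$ and the identity above becomes $L(y) = [p,L(y)]$. Writing the Peirce decomposition $L(y) = z_{11} + z_{12} + z_{21} + z_{22}$ with $z_{ij} \in S_{ij}$, the relations $p_i z_{k\ell} = \delta_{ik}\, z_{k\ell}$ and $z_{k\ell} p_j = \delta_{\ell j}\, z_{k\ell}$ yield
\[
pL(y) - L(y)p = z_{12} - z_{21}.
\]
Equating this with $L(y)$ forces $z_{11} = 0$, $z_{22} = 0$ and $2 z_{21} = 0$; the last equation gives $z_{21} = 0$ because $A$ is a $*$-algebra over $\mathbb{C}$ (so $2$ is invertible). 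Therefore $L(y) = z_{12} \in S_{12}$.

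The case $y \in S_{21}$ is entirely symmetric: $py = 0$, $yp = y$, so $[p,y] = -y$ and the identity becomes $-L(y) = z_{12} - z_{21}$, which forces $z_{11} = z_{22} = z_{12} = 0$ and leaves $L(y) = z_{21} \in S_{21}$. There is no real obstacle beyond careful bookkeeping of the Peirce components; the one thing worth flagging is the implicit use of $\tfrac{1}{2}$, which is legitimate since $A$ is a complex $*$-algebra, so the conclusion $L(S_{ij}) \subset S_{ij}$ for $i \neq j$ follows immediately.
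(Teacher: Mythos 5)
Your argument is correct and is essentially identical to the paper's proof: both apply $L$ to $[p,y]=\pm y$, use $L(p)\in Z(A)$ to reduce to $\pm L(y)=[p,L(y)]$, and then compare Peirce components (the paper extracts them by multiplying with $p$ and $\mathbf{1}-p$, which is the same directness of the decomposition you invoke). The only cosmetic difference is your explicit remark that $2$ is invertible, which the paper leaves implicit.
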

\begin{proof} Let  \ $x \in  S_{12}$ \ and \ $L(x)=\sum\limits_{i,j=1,2} x_{ij}$,
 \ where \  $x_{ij}\in S_{ij}$. \ Since \ $[p,x]=x$, \ it follows from the inclusion \ $L(p)\in
Z(A)$, \ that
$$
\sum\limits_{i,j=1,2}x_{ij}=L(x)=L([p,x])=[L(p),x]+[p,L(x)]=[p,L(x)]=x_{12}-
x_{21}.
$$
Therefore, \ $x_{11}+2x_{21}+x_{22}=0$, \ and hence
$x_{11}=p(x_{11}+2x_{21}+x_{22})=0$. Thus \ $2x_{21}+x_{22}=0$, \
which implies that \
$$
x_{22}=x_{22}(\mathbf{1}-p)=(2x_{21}+x_{22})(\mathbf{1}-p)=0.
$$
Therefore, \ $x_{22}=0$, \ \ and so \ $x_{21}=0$. \ Thus \ $L(x)
\in S_{12}$. \  Similarly, the inclusion \ $L(S_{21})\subset
S_{21}$ is established.
\end{proof}

\begin{lemma}
\label{lemma5} (see \ \cite[Lemma 7]{Mar}). If \ $p$ \ and \ $L$ \
are the same as in Lemma \ref{lemma3}, \ and \ $L(p)\in Z(A)$, \
then \ $L\left(  S_{ii}\right) \subset S_{ii}+Z(A), \ i=1,2$.
\end{lemma}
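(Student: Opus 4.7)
The strategy mirrors the proof of Lemma \ref{lemma4}: first eliminate the off-diagonal corners of $L(x)$, then absorb the remaining $S_{22}$-component into a central element of $A$. I treat $i=1$; the case $i=2$ is symmetric. Throughout write $L(x)=\sum_{i,j=1,2}x_{ij}$ with $x_{ij}\in S_{ij}$.

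First I observe $L(\mathbf{1})\in Z(A)$: for every $y\in A$, $[\mathbf{1},y]=0$ gives $[L(\mathbf{1}),y]=0$. Consequently $L(p_2)=L(\mathbf{1})-L(p_1)\in Z(A)$ by the hypothesis $L(p)\in Z(A)$. For $x\in S_{11}$ we have $[p_2,x]=0$, so applying $L$ and using $L(p_2)\in Z(A)$ yields $[p_2,L(x)]=0$. Since $p_2L(x)=x_{21}+x_{22}$ while $L(x)p_2=x_{12}+x_{22}$, this forces $x_{21}=x_{12}$, and as these lie in complementary corners, $x_{12}=x_{21}=0$. Hence $L(x)=x_{11}+x_{22}$.

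Next I show $x_{22}\in Z(S_{22})$, i.e.\ $x_{22}$ commutes with every element of $S_{22}$. For each $y\in S_{22}$, $[x,y]=0$ gives $[L(x),y]+[x,L(y)]=0$. Running the previous step with the roles of $p_1$ and $p_2$ interchanged (the hypothesis $L(p_1)\in Z(A)$ is symmetric in $p_1,p_2$ once $L(\mathbf{1})\in Z(A)$ is known) yields $L(y)=y_{11}+y_{22}$, and the identity then splits by corners into $[x_{22},y]=0$ in $S_{22}$ and $[x,y_{11}]=0$ in $S_{11}$; the first is what I need.

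Finally I lift $x_{22}$ to $Z(A)$ using the partial isometry $u\in M$ constructed in the proof of Lemma \ref{lemma1}, with $u^*u=p_1$ and $uu^*=q_1\leq p_2$. Set $\tilde b:=u^*x_{22}u\in S_{11}$ and $z:=\tilde b+x_{22}$. Since $uw\in S_{22}$ for every $w\in S_{12}$ and $uau^*\in S_{22}$ for every $a\in S_{11}$, the centrality of $x_{22}$ in $S_{22}$ combined with $u^*u=p_1$ yields the four commutation identities $\tilde b w=wx_{22}$, $v\tilde b=x_{22}v$ for $v\in S_{21}$, $\tilde b a=a\tilde b$, and trivially $\tilde b y=y\tilde b=0$ for $y\in S_{22}$. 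These together place $z$ in $Z(A)$, and since $zp_2=x_{22}$ we conclude $L(x)-z=x_{11}-\tilde b\in S_{11}$, proving $L(S_{11})\subset S_{11}+Z(A)$. I expect the main delicate point to be the bookkeeping in the four commutation verifications, but they all reduce to the single fact that $x_{22}$ commutes with elements of $S_{22}$ of the form $uw$ and $uau^*$; importantly the construction uses only the subordination $p_1\preceq p_2$ from the hypothesis $c(p)=c(\mathbf{1}-p)=\mathbf{1}$, so it remains valid even when $q_1<p_2$ strictly.
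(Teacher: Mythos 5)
Your proof is correct, and its first two steps (killing the off-diagonal corners of $L(x)$ via $[p,x]=0$, then extracting $[x_{22},y]=0$ for all $y\in S_{22}$ from $0=L([x,y])$) coincide with the paper's argument, up to the cosmetic detour through $L(\mathbf{1})$ and $L(p_2)$. The final step is genuinely different. The paper proves the structural identity $Z\bigl((\mathbf{1}-p)A(\mathbf{1}-p)\bigr)=(\mathbf{1}-p)Z(A)$: for $0\le a\in Z(p_2Ap_2)$ it inverts $a+p_2$ inside $p_2LS(M)p_2$, notes the inverse is bounded and so lies in $Z(p_2Mp_2)=p_2Z(M)$ by Dixmier's theorem, and pulls $a$ back into $p_2Z(A)$; this uses the operator-algebraic structure of $LS(M)$ but not the subordination $p_1\preceq p_2$. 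You instead extend $x_{22}$ to a central element by hand, setting $z=u^{*}x_{22}u+x_{22}$ with $u^{*}u=p_1$, $uu^{*}=q_1\le p_2$; your corner-by-corner commutation checks are all correct (each reduces to $x_{22}$ commuting with one of $uau^{*}$, $uw$, $vu^{*}$, which lie in $S_{22}$), only $u^{*}u=p_1$ is ever used, and $zp_2=x_{22}$ gives the conclusion. This route is purely ring-theoretic, closer to Martindale's original argument, and avoids inverses in $LS(M)$, at the price of needing $p_1\preceq p_2$. One small correction there: the subordination does not follow from $c(p)=c(\mathbf{1}-p)=\mathbf{1}$ alone (rank $2$ versus rank $1$ projections in $M_3(\mathbb{C})$ give a counterexample); it is a separate part of the standing choice of $p$ made at the start of Section 3 via \cite[Task 6.1.9]{KR}, and since the proof of Lemma \ref{lemma1} already relies on it, it is legitimately available to you here. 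What the paper's approach buys in exchange is the reusable fact $Z(p_2Ap_2)=p_2Z(A)$, valid without any comparability assumption on $p$.
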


\begin{proof}  If \ $x\in  S_{11}$ \ and \
$L(x)=\sum\limits_{i,j=1,2} x_{ij}$, \ where \ $x_{ij}\in S_{ij}$,
then
$$
x=pxp, \ [p,x]=px-xp=0,
$$
and therefore
$$0=L([p,x])=
[L(p),x]+[p,L(x)]=[p,L(x)]=x_{12} -x_{21}.
$$
Consequently,
$$
x_{12}=px_{12}=p(x_{12}-x_{21})=0
$$
and
$$x_{21}=x_{12}-(x_{12}-x_{21})=0,$$
i.e. \ $L(x)\in S_{11}+ S_{22}$. \ Similarly, if \ $y\in S_{22}$,
\ then \ $L(y)\in  S_{11}+
 S_{22}$.

Let now \ $x\in  S_{11}$ \ and \ $y\in  S_{22}$, \
$L(x)=a_{11}+a_{22}, \ L(y)=b_{11}+b_{22}$, \ where \ $a_{ii}, b_{ii} \in
S_{ii}, \ i=1,2$. \ From equations \
$0=L([x,y])=[a_{22},y]+[x,b_{11}]$ \ and  \ $[a_{22},y]\in S_{22},
\ [x,b_{11}]\in S_{11},$ \ it follows that \
$[a_{22},y]=0=[x,b_{11}]$. In particular, \ $a_{22}b=ba_{22}$ \
for all \ $b\in (\mathbf{1}-p)A(\mathbf{1}-p)$,  i. e. \
$a_{22}\in Z((\mathbf{1}-p)A(\mathbf{1}-p))$.

Let us show that\
$Z((\mathbf{1}-p)A(\mathbf{1}-p))=(\mathbf{1}-p)Z(A)$. \ It is
clear that
$$
(\mathbf{1}-p)Z(A)\subset Z((\mathbf{1}-p)A(\mathbf{1}-p)).
$$
Let\ $0\leq a\in Z((\mathbf{1}-p)A(\mathbf{1}-p)), \
z=a+(\mathbf{1}-p)$ \ and let \ $b$ \ be an inverse of \ $z$ \ in the
algebra \ $(\mathbf{1}-p)LS(M)(\mathbf{1}-p)$. \  Since \ $0\leq
b\leq\mathbf{1}-p$ \ and \ $b\in
Z((\mathbf{1}-p)A(\mathbf{1}-p))$, it follows that \ $b\in
Z((\mathbf{1}-p)M(\mathbf{1}-p))$. \ According to \ \cite[ Part I,
Ch. 2, \S 1]{D}, \ the equality \
$Z((\mathbf{1}-p)M(\mathbf{1}-p))=(\mathbf{1}-p)Z(M)$\  holds, and
therefore \ $b=(\mathbf{1}-p)d$, \ where \ $d\in Z(M)\subset
Z(A)$. \ Consequently,\ $b\in(\mathbf{1}-p)Z(A)$ \ and the \
operator \ $z$ \ is inverse of \ $b$ \  in the algebra \
$(\mathbf{1}-p)LS(M)(\mathbf{1}-p)$ \ also belongs of the algebra
\ $(\mathbf{1}-p)Z(A)$, \ that implies $a\in (\mathbf{1}-p)Z(A)$.
Thus, $Z((\mathbf{1}-p)A(\mathbf{1}-p))=(\mathbf{1}-p)Z(A)$. \
Hence \ $a_{22}=\left( \mathbf1- p \right) z$, \  for some  \
$z\in Z(A)$, \ i.e.
$$
L(x)=a_{11}+\left( \mathbf1- p \right) z=(a_{11}-pz)+z\in
S_{11}+Z(A).
$$

The inclusion  \ $L(S_{22})\subset
 S_{22}+Z(A)$ is established similarly.
\end{proof}

Lemmas \ref{lemma4} and \ref{lemma5} imply that in the case, when
\ $L(p)\in Z(A)$, \ we have that \ $L(x)\in S_{ij}$ \ for \ $x\in
S_{ij},$ \  \ $i\neq j$ \  and that \ $L(x)=d(x)+z(x)$ \ , \ $x\in
S_{ii}$, \ where \ $d(x)\in S_{ii}$ \ and \
 $z(x)\in Z(A)$ .

We chose the projection  \ $p\in\mathcal{ P}(M)$ \ so that the
equality \ $c(p)=c(\mathbf{1}-p)=\mathbf{1}$ \  holds (a
projection exists as a von Neumann algebra \ $M$ \ has no direct
Abelian summand). In this case, the equality  \ $pAp\cap
Z(A)=\{0\}$ holds. \ Indeed, if there is \ $0\neq a\in pAp\cap
Z(A)$, \ then \ $0\neq |a|\in pAp\cap Z(A)$ \ and therefore\ the
operator $|a|$ \ has non-zero spectral projection\ $q=\{|a|\geq
\epsilon\}$, \
  such that \ $q\leq p$ \ and \ $q\in Z(A) $. \ Hence,
 \  $q(\mathbf{1}-p)=0$,  \ which is impossible by equation   \ $c(\mathbf{1}-p)=\mathbf{1}$. \  Thus, \ $pAp\cap Z(A)=\{0\}$.

If \ $x\in S_{ii}$ \ and \ $L(x)=d_{1}(x)+z_{1}(x)$, \ where \
$d_{1}(x)\in S_{ii}, \ z_{1}(x)\in Z(A)$, \ then
$$d(x)-d_{1}(x)=z(x)-z_{1}(x)\in
S_{ii}\cap Z(A)=(p_{i}Ap_{i})\bigcap Z(A)=\{0\}.
$$
Consequently, for each  \ $x\in S_{ii}$ \ the element \ $L(x)$ \
is uniquely represented in the form \ $L(x)=d(x)+z(x)$, \ where \
$d(x)\in S_{ii}, \ z(x)\in Z(A)$. \

Thus, when choosing a projector  \ $p\in \mathcal{P}(M)$ \ with \
$c(p)=c(\mathbf{1}-p) = \mathbf{1}$, \  where \ $L(p)\in Z(A)$, \
the map \ $D$ \ is correctly defined from

$A=\sum\limits_{i,j=1,2}S_{ij}$ \ in \ $A$ \ by the rule
$$
D(x)=d(x_{11})+d(x_{22})+L(x_{12}+x_{21}), \eqno (3.5)
$$
where \ $x_{ij}=p_{i}xp_{j}, \ i,j=1,2,  \ L(x_{12}+x_{21})\in
S_{12}+S_{21}$.

If \ $x,y\in S_{11}$, \ then
$$L(x+y)=d(x+y)+z(x+y);$$
$$L(x)=d(x)+z(x);$$
$$L(y)=d(y)+z(y).$$
Since \ $L$ is a linear map, it follows that
$$d(x+y)+z(x+y)=d(x)+d(y)+z(x)+z(y);$$
$$d(\lambda x)+z(\lambda
x)=\lambda(d(x)+z(x)),\lambda\in \mathbb{C}.$$ Since \
$p_{1}Ap_{1}\cap Z(A)=\{0\}$, \ it follows that \
$$d(x+y)=d(x)+d(y);$$
$$z(x+y)=z(x)+z(y);$$
$$d(\lambda x)=\lambda d(x);$$
$$z(\lambda x)=\lambda z(x).$$
This means that the mapping \ $d:S_{11}\rightarrow S_{11} $ \ and
\  $z: S_{11}\rightarrow Z(A)$ \  are linear. For the same
reasons, these mappings are linear on  \ $S_{22}$. \ Thus the map
\ $D$ \ constructed in (3.5)  is a linear operator acting on the
solid $\ast$-subalgebra $A$. In this case, the map \ $E$ \ from \
$A$ \ to \ $Z(A)$, \ defined by the equality
$$
E(x)=L(x)-D(x)=z(x_{11})+z(x_{22}), \ \  x\in A,
$$
is also linear. In additional, for \ $x\in S_{ij}, \ i\neq j$. \ we have that
\ $E(x)=0$, \ i.e. $L(x)=D(x)$.

\begin{lemma}
\label{lemma6} (see \ \cite[Lemma 9]{Mar}). If \ $x\in S _{ij}, \
i\neq j, \ y\in A$, \ then \
$$
D(xyx)=D(x)yx+xD(y)x+xyD(x).
$$
\end{lemma}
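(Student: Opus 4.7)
The key trick is to rewrite $xyx$ as an iterated commutator. Since $x \in S_{ij}$ with $i \neq j$ and $p_ip_j = 0$, I first observe that $x^2 = p_ixp_jp_ixp_j = 0$. Consequently,
\[
[x,[x,y]] = x(xy-yx) - (xy-yx)x = x^2y - 2xyx + yx^2 = -2xyx,
\]
so $xyx = -\tfrac{1}{2}[x,[x,y]]$. This is the identity that lets me pull the Lie derivation property into the picture.

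Next I apply the Lie derivation property twice to $L(xyx)$:
\[
L(xyx) = -\tfrac{1}{2}\bigl([L(x),[x,y]] + [x,[L(x),y]] + [x,[x,L(y)]]\bigr).
\]
By Lemma \ref{lemma4} we have $L(x) \in S_{ij}$, so by the same argument as for $x^2$, the products $xL(x)$ and $L(x)x$ both vanish, as does $L(x)^2$. Expanding the three double brackets and crossing out every term containing an adjacent pair $xL(x)$, $L(x)x$, or $x^2$ yields
\[
[L(x),[x,y]] + [x,[L(x),y]] = -2L(x)yx - 2xyL(x),
\]
while $[x,[x,L(y)]] = -2xL(y)x$ directly (using $x^2 = 0$). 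Therefore
\[
L(xyx) = L(x)yx + xL(y)x + xyL(x).
\]

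To finish, I use the definition of $D$. Since $xyx \in S_{ij}$ with $i \neq j$, Lemma \ref{lemma4} gives $L(xyx) = D(xyx)$, and similarly $L(x) = D(x)$. For the middle term, write $L(y) = D(y) + E(y)$ using the decomposition constructed just before the lemma, where $E(y) \in Z(A)$; then
\[
xL(y)x = xD(y)x + E(y)x^2 = xD(y)x.
\]
Substituting back gives exactly $D(xyx) = D(x)yx + xD(y)x + xyD(x)$, as required.

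The only genuinely substantive step is the algebraic identity $xyx = -\tfrac{1}{2}[x,[x,y]]$ — once that is in hand, the proof is mechanical, and the nilpotence relations $x^2 = L(x)^2 = xL(x) = L(x)x = 0$ (all immediate from the Peirce-type grading $S_{ij}$ and Lemma \ref{lemma4}) make everything collapse cleanly. The possible stumbling block is remembering to separate $L(y)$ into its $D$ and $E$ parts so as to kill the unwanted $E(y)$ contribution via centrality and $x^2 = 0$.
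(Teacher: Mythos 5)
Your proof is correct and follows essentially the same route as the paper: the paper writes $2xyx=[[x,y],x]$ where you write $xyx=-\tfrac12[x,[x,y]]$ (the same identity up to antisymmetry), and both arguments then apply the Lie derivation rule twice and collapse terms using $x^2=0$, $L(x)=D(x)\in S_{ij}$ (hence $xL(x)=L(x)x=0$), and the centrality of $E(y)$ to replace $[x,L(y)]$ by $[x,D(y)]$.
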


\begin{proof}
Since \ $L(y)=D(y)+E(y)$ \ and \ $E(y) \in Z(A)$, \ it follows
that $[x,L(y)]=[x,D(y)].$

According to the equation
$$
[[x,y],x]=[p_{i}xp_{j}y-yp_{i}xp_{j},p_{i}xp_{j}]=2xyx\in
S_{ij},
$$ we have
$$
2D(xyx)=L(2xyx)=L([[x,y],x])= [[L(x),y]+[x,L(y)],x]+ [[x,y],L(x)]=
$$
$$
=[[D(x),y]+[x,D(y)],x]+[[x,y],D(x)]=
$$
$$
=D(x)yx-yD(x)x-xD(x)y+xyD(x)+xD(y)x-D(y)x^{2}-
$$
$$
-x^{2}D(y)+xD(y)x+xyD(x)-yxD(x)-D(x)xy+D(x)yx.
$$
Since \ $x\in S_{ij}, \ i\neq j$, \ it follows that \ $x^{2}=0$ \
and \ $D(x)=L(x)\in S_{ij}$. \ Therefore,  \  $D(x)x=0=xD(x)$ \
and \ $D(xyx)=D(x)yx+xD(y)x+xyD(x)$.
\end{proof}

\begin{lemma}
\label{lemma7} (see \ \cite[Lemma 10, 11]{Mar}). If $x \in S_{ii}$
and $y\in A$, or $x \in A$ and $y\in  S_{ii},$ $i=1,2$, then
$$D(xy)=D(x)y+xD(y). \eqno (3.6)$$
\end{lemma}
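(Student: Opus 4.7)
My plan is to establish the Leibniz identity $D(xy) = D(x)y + xD(y)$ in three stages of growing generality, exploiting the decomposition $A = \sum_{i,j} S_{ij}$, the projection orthogonality $p_1 p_2 = 0$, and Lemmas \ref{lemma1}, \ref{lemma4}, \ref{lemma5}.

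First I will handle the ``mixed'' case in which one argument is diagonal and the other off-diagonal, say $x \in S_{11}$ and $y \in S_{12}$. Because $yx = 0$, the commutator collapses to $[x, y] = xy \in S_{12}$, so the Lie identity $L([x,y]) = [L(x), y] + [x, L(y)]$ admits a clean unwinding: writing $L(x) = D(x) + E(x)$ with $E(x)$ central (hence commuting with $y$ and dropping out) and using $L(y) = D(y) \in S_{12}$ from Lemma \ref{lemma4}, the remaining cross-products $y D(x)$ and $D(y) x$ vanish by projection orthogonality. What survives is exactly $D(xy) = D(x)y + xD(y)$. The same computation covers every pair $(S_{ii}, S_{ij})$ and $(S_{ij}, S_{jj})$ with $i \ne j$.

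The core step is the diagonal case $x, y \in S_{11}$. Choose any $w \in S_{12}$; then $wx = 0$, so $[xy, w] = xyw \in S_{12}$. Running the Lie identity on $[xy, w]$ with the same cancellations as above produces
$$D(xyw) = D(xy)\,w + xy\,D(w).$$
On the other hand, the already-proved mixed case, applied first to $x$ and $yw \in S_{12}$ and then to $y$ and $w$, evaluates $D(xyw)$ also as $D(x)yw + xD(y)w + xyD(w)$. Subtracting these two expressions yields
$$\bigl(D(xy) - D(x)y - xD(y)\bigr)w = 0$$
for every $w \in S_{12}$. The element in brackets belongs to $S_{11}$, so Lemma \ref{lemma1} forces it to vanish, which is the Leibniz rule in this case. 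The case $x, y \in S_{22}$ is symmetric.

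Finally the general claim follows by expanding an arbitrary $y \in A$ as $y = \sum_{j,k} y_{jk}$ with $y_{jk} \in S_{jk}$: for $x \in S_{ii}$ only the components $y_{i\cdot}$ contribute nontrivially on either side of the target equation, so the identity reduces by linearity of $D$ to the two cases already treated. The symmetric statement (with $y \in S_{ii}$ and $x \in A$) is handled identically. The main obstacle is the diagonal step, where the Lie identity only yields information about $D(xyw)$ after multiplication by an off-diagonal $w$; one must recover $D(xy)$ itself through the faithfulness statement of Lemma \ref{lemma1}, while everything else is essentially structural bookkeeping.
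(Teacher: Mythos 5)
Your proposal is correct and follows essentially the same route as the paper: first the mixed case via the Lie identity applied to $[x,y]=xy$ with the central part $E$ and the orthogonal cross-terms dropping out, then the diagonal case by computing $D(xyw)$ in two ways for $w\in S_{12}$ and invoking Lemma \ref{lemma1}, and finally linearity over the Peirce decomposition. The only point worth making explicit is the identification $L(xy)=D(xy)$ for $xy\in S_{12}$ (i.e.\ $E$ vanishes on the off-diagonal corners), which you use implicitly and which the paper records just before Lemma \ref{lemma6}.
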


\begin{proof}
If\ $x\in S_{11}$ \ and \ $y\in S_{12}$, \ then \ $xy\in S_{12}, \
[x,y]=xy$, \ and so
$$D(xy)=L(xy)=L([x,y])=[D(x)+E(x),y]+[x,D(y)+E(y)]=$$
$$=[D(x),y]+[x,D(y)]=D(x)y+xD(y).$$
For\ $x\in S_{11}$ \ and \ $y\in S_{21}$ \ we have, that  \
$xy=0$. \ Since \ $D(x)\in S_{11}$ \ and \ $D(y)\in S_{21}$, \ it follows
that $ D(x)y=0, \ xD(y)=0$, \ which implies that (3.6). For cases
\ $x\in S_{22}, \ y\in S_{ij}, \ i\neq j$ \ the equality  (3.6) is
established similarly.

Now let \ $x,y\in S_{11}, \ z\in S_{12}$. \
Since \ $yz\in S_{12}$, \  according to the arguments above it
follows that
$$
D(xy)z=D(xyz)-xyD(z)=D(x)yz+xD(yz)-xyD(z)= D(x)yz+
$$
$$
+x( D(y)z+yD(z)) -xyD(z)= (D(x)y+xD(y)) z.
$$
Consequently,  \ $( D(xy)-D(x)y-xD(y)) z=0$ \ for all\ $z\in
S_{12}$. \ Since  $D(x), \ D(y), \ D(xy) \in S_{11}$, \ it follows
that \ $(D(xy)-D(x)y-xD(y))\in S_{11}$, \    and from Lemma
\ref{lemma1} we have

$$
D(xy)-D(x)y-xD(y)=0.
$$
In the case when, \  $x\in S_{11}, \ y\in S_{22}$, \ or  \
  $x\in S_{22}, \ y\in  S_{11}$, \ we have that\ $xy=0$ \ and \ $D(x)y+xD(y)=0$.
  \ The case \ $x,y \in  S_{22}$ \ can be treated similarly to that of \ $x,y\in S_{11}$. \

Thus, equality (3.6) holds for any  \ $x\in S_{ii}, \ y\in S_{kl}$
\ for all \ $i, k, l=1,2$.  \. Using the equality  \
$A=\sum\limits_{i,j=1,2} S_{ij}$ \ and \  linearity of the mapping
\ $D$, \ we obtain that the equality (3.6) holds for any  \ $x\in
S_{ii}, \ y\in A, \ i=1,2 $. The equality (3.6) for the case \
$x\in A$ \ and \
 $y\in S_{ii}, \ i=1,2$ can be established similarly.
\end{proof}

\begin{lemma}
\label{lemma8} (see \  \cite[Lemma 10]{M2}).
The linear mapping \ $D$, \  defined by (3.5) is an associative
derivation on $A$.
\end{lemma}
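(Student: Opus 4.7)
The plan is to establish the Leibniz identity
$$
D(xy)=D(x)y+xD(y)
$$
for all $x,y\in A$. By linearity and the Peirce decomposition $A=\sum_{i,j=1,2}S_{ij}$, it suffices to verify this for $x\in S_{ij}$, $y\in S_{kl}$. Lemma \ref{lemma7} already handles every case in which one of the factors lies in a diagonal block $S_{ii}$. When $x$ and $y$ lie in the \emph{same} off-diagonal block (both in $S_{12}$ or both in $S_{21}$) the identity is trivial: $xy=0$, and by Lemma \ref{lemma4} together with the fact that $D=L$ on off-diagonal elements both $D(x)$ and $D(y)$ stay in that same block, forcing $D(x)y=xD(y)=0$. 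The only remaining subcase is the crossed one; by symmetry I focus on $x\in S_{12}$, $y\in S_{21}$, for which $xy\in S_{11}$.

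For this case I introduce the Leibniz defects
$$
\phi(x,y):=D(xy)-D(x)y-xD(y)\in S_{11},\qquad \psi(y,z):=D(yz)-D(y)z-yD(z)\in S_{22}
$$
for $x,z\in S_{12}$ and $y\in S_{21}$. The associativity $(xy)z=x(yz)$ puts $xy\in S_{11}$ on the left and $yz\in S_{22}$ on the right, so Lemma \ref{lemma7} applies to both factorisations; equating the two expressions for $D((xy)z)$ yields $\phi(x,y)\,z=x\,\psi(y,z)$. The same computation on $(yz)w=y(zw)$ with $w\in S_{21}$ gives the twin relation $\psi(y,z)\,w=y\,\phi(z,w)$. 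Chaining the two produces the key identity
$$
\phi(x,y)\cdot zw \;=\; xy\cdot\phi(z,w)\qquad\text{for all }x,z\in S_{12},\ y,w\in S_{21}.
$$

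The next step is to collapse this via the partial isometry $u\in M$ with $u^{*}u=p_1$ and $uu^{*}=q_1\le p_2$ supplied by $p_1\preceq p_2$. Then $u^{*}\in S_{12}$ and $u\in S_{21}$; setting $z=u^{*}$, $w=u$, so that $zw=p_1$ acts as identity on the $S_{11}$-valued $\phi(x,y)$, yields
$$
\phi(x,y)\;=\;(xy)\,\phi(u^{*},u),
$$
reducing everything to the single vanishing $\phi(u^{*},u)=0$.

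This last step is the main obstacle, and I plan to deduce it from the Lie identity applied to $[u^{*},u]=p_1-q_1$. Peirce-decomposing $L([u^{*},u])=[D(u^{*}),u]+[u^{*},D(u)]$ (with $D(u^{*})=L(u^{*})\in S_{12}$ and $D(u)=L(u)\in S_{21}$ by Lemma \ref{lemma4}) identifies its $S_{11}$-part as $D(u^{*})u+u^{*}D(u)$ and its $S_{22}$-part as $-uD(u^{*})-D(u)u^{*}$. Using $L(p_1)\in Z(A)$ (the running assumption of the section), $D(p_1)=0$ by uniqueness of the decomposition $L=D+E$ on $S_{11}$, giving $\phi(u^{*},u)=-(D(u^{*})u+u^{*}D(u))$ and $\psi(u,u^{*})=D(q_1)-uD(u^{*})-D(u)u^{*}$. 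On the other hand $L(p_1-q_1)=L(p_1)-D(q_1)-E(q_1)$; writing $\xi:=L(p_1)-E(q_1)\in Z(A)$ and comparing Peirce pieces gives
$$
\phi(u^{*},u)=-p_1\xi,\qquad \psi(u,u^{*})=p_2\xi.
$$
Plugging these into the specialisation $\psi(u,u^{*})\,u=u\,\phi(u^{*},u)$ of the second twin relation and using $p_2u=u$, $up_1=u$ and the centrality of $\xi$, the identity collapses to $2\xi u=0$, hence $\xi u=0$; left-multiplying by $u^{*}$ gives $\xi p_1=0$, so $\phi(u^{*},u)=-p_1\xi=0$. Feeding this back yields $\phi\equiv 0$, and then $\psi\equiv 0$ follows from $\psi(y,z)w=y\phi(z,w)=0$ together with Lemma \ref{lemma1}. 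The Leibniz identity now holds on every block, so $D$ is an associative derivation.
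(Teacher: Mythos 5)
Your proof is correct, and it takes a genuinely different route from the paper's. The paper fixes $x\in S_{12}$, $y\in S_{21}$, sets $z=E([x,y])\in Z(A)$, and derives the relation (3.7) whose two brackets lie in $S_{11}$ and $S_{22}$; it then shows $z=0$ by multiplying by $x$ and $y$, invoking Lemma \ref{lemma6} (the $D(xyx)$ identity) and Lemma \ref{lemma7} to get $xz=yz=0$, passing to $xz^{*}=z^{*}x=0$ via the polar decomposition and the solidity of $A$ (to conclude $|x|=u^{*}x\in A$), and finally squeezing (3.7) between $z^{*}$'s to get $z^{*}zz^{*}=0$. You instead track the Leibniz defects $\phi$, $\psi$ directly: associativity plus Lemma \ref{lemma7} gives the intertwining relations $\phi(x,y)z=x\psi(y,z)$ and $\psi(y,z)w=y\phi(z,w)$, the partial isometry $u$ realizing $p_{1}\preceq p_{2}$ collapses everything to the single defect $\phi(u^{*},u)$, and that defect is computed outright by Peirce-decomposing $L([u^{*},u])=L(p_{1}-q_{1})$ and exploiting the centrality of $\xi=L(p_{1})-E(q_{1})$. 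I checked the component bookkeeping ($\phi(u^{*},u)=-p_{1}\xi$, $\psi(u,u^{*})=p_{2}\xi$, and the collapse $2\xi u=0$) and it is sound. Your route dispenses with Lemma \ref{lemma6} and with the polar-decomposition/solidity step entirely, using only Lemmas \ref{lemma1}, \ref{lemma4}, \ref{lemma7} and the standing hypotheses $L(p)\in Z(A)$, $c(p)=c(\mathbf{1}-p)=\mathbf{1}$, $p\preceq\mathbf{1}-p$; the paper's route avoids any explicit computation with $u$ beyond its existence. One small caveat: your last appeal to Lemma \ref{lemma1}, for an element of $S_{22}$ annihilating all of $S_{21}$, uses an index case whose proof the paper only sketches as ``similar'' (it rests on $c(p_{1})=\mathbf{1}$ rather than on $p_{1}\preceq p_{2}$); this is legitimate since the lemma is stated for all indices, but worth being aware of.
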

\begin{proof}
In the case when  \ $x\in S_{ii}, \ i=1,2, \ y\in A$, \ lemma
\ref{lemma8} is proved in Lemma  \ref{lemma7}. If \ $ x\in S_{12}$
\ and \ $y\in S_{21}$, \ then
$$
E([x,y])=L([x,y])- D([x,y])=[L(x),y]+[x,L(y)]-D([x,y])=
$$
$$
=[D(x),y]+[x,D(y)]-D(xy)+D(yx).
$$
Consequently, for \ $z=E([x,y])\in Z(A)$, \ we have
$$ z={D(x)y+xD(y)-D(xy)}+{D(yx)-D(y)x-yD(x)}. \eqno (3.7)$$
Since
$$
(D(x)y+xD(y)-D(xy))\in S_{11}, \ (D(y)x+yD(x)-D(yx))\in S_{22}
$$
and \ $S_{11}\cap S_{22}=\{0\}$, \ then in the case of  \ $z=0$ \
 equation (3.7) implies that
$$D(xy)=D(x)y+xD(y).$$
Let us show, that \ $z=0$. \ Suppose that \ $z\neq 0.$\
Multiplying the left hand-side of (3.7) by $x$ and then by $y$, we
have that
$$
 xz=xD(yx)-xD(y)x-xyD(x);  \eqno (3.8)
$$
$$
 yz=yD(x)y+yxD(y)-yD(xy).   \eqno (3.9)
$$
Since \ $yx\in S_{22}$, \ it follows from Lemma \ref{lemma7} that

$$D(xyx)=D(x)yx+xD(yx).$$
From this fact and the equality (3.8), it follows that
$$xz=D(xyx)-D(x)yx-xD(y)x-xyD(x).$$
Now, using Lemma  \ref{lemma6}, we have that  \ $xz=0$.\

Due to \
$xy\in S_{11}$ \ and Lemma \ref{lemma7}, we have
$$D(yxy)=D(y)xy+yD(xy).$$

Therefore, from (3.9) and Lemma 6 it follows that
$$yz=yD(x)y+yxD(y)-D(yxy)+D(y)xy=$$
$$yD(x)y+yxD(y)-D(y)xy-yD(x)y-yxD(y)+D(y)xy=0.$$
Let \ $x=u|x|$ be the polar decomposition of locally measurable
operators \ $x\in LS(M)$. \ Since \ $xz=0, \ x\in A, \ z\in Z(A),
\ M \subset A$, \ it follows that
$$
|x|=u^{*}x \in A, \
z|x|=|x|z=u^{*}xz=0,
$$
Hence
$$|x|z^{*}=z^{*}|x|=0 \ \ \ \text{and} \ \ \ xz^{*}=z^{*}x =0.$$
Similarly, using the equality \ $yz=0$, \, we have that
$$
|y|z^{*}=z^{*}|y|=0 \ \ \ \text{and} \ \ \ yz^{*}=z^{*}y =0.
$$
Multiplying the left hand-side and the right hand-side of (3.7) by
\ $z^{*}$\, we have that
$$
z^{*}zz^{*}=z^{*}(D(yx)-D(xy))z^{*}.
$$
Since \
$$yx\in S_{22}, \ uxz^{*}=0=z^{*}y$$
then by Lemma \ref{lemma7}, we have that
$$
0=D(yx(\mathbf{1}-p)z^{*})=D(yx)(\mathbf{1}-p)z^{*}+yxD((\mathbf{1}-p)z^{*})\eqno (3.10).
$$
Using \ $D(yx)\in S_{22}$ \ and equality  (3.10), we have that
$$D(yx)z^{*}=D(yx)(\mathbf{1}-p)z^{*}=-yxD((\mathbf{1}-p)z^{*}).$$
Similarly,  \ $D(xy)z^{*}=-yxD(pz^{*})$. \ Since \
$z^{*}x=0=z^{*}y$, \ it follows that
$$
z^{*}zz^{*}=z^{*}xyD(pz^{*})-z^{*}yxD((\mathbf{1}-p)z^{*})=0,
$$
that implies  \ $|z^{*}|^{2}=zz^{*}zz^{*}=0$, \ i. e. \ $z=0$. In
the case  $x \in S_{21}$ and  $y \in S_{12}$ the statement
 of Lemma \ref{lemma8} is proved similarly.
\end{proof}
In particular, Lemma \ref{lemma8} implies that
\begin{cor}
\label{cor1} $E([x,y])=0$ \ for all \ $x,y\in A$.
\end{cor}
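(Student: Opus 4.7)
The plan is to exploit the fact that both $L$ and $D$ are Lie derivations, so $E = L - D$ is a Lie derivation whose values lie in the center $Z(A)$, which forces the commutator to vanish.

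First, I would observe that every associative derivation is automatically a Lie derivation: indeed, if $D$ satisfies the Leibniz rule, then
\begin{equation*}
D([x,y]) = D(xy) - D(yx) = D(x)y + xD(y) - D(y)x - yD(x) = [D(x),y] + [x,D(y)].
\end{equation*}
By Lemma \ref{lemma8}, the map $D$ defined in (3.5) is an associative derivation on $A$, hence by the observation above it is a Lie derivation on $A$.

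Next, since $E(x) = L(x) - D(x)$ for all $x \in A$ by construction, and since both $L$ and $D$ are Lie derivations, the map $E$ is itself a Lie derivation. Therefore, for any $x,y \in A$,
\begin{equation*}
E([x,y]) = [E(x), y] + [x, E(y)].
\end{equation*}

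Finally, I would use the fact that $E$ takes values in $Z(A)$: since $E(x), E(y) \in Z(A)$, both commutators $[E(x), y]$ and $[x, E(y)]$ vanish, and we obtain $E([x,y]) = 0$. There is no real obstacle here; the corollary is essentially a bookkeeping consequence of Lemma \ref{lemma8}, once one notes that associative derivations are Lie derivations and that commutators with central elements are zero.
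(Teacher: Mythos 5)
Your proof is correct and is essentially the same as the paper's: both rest on Lemma \ref{lemma8} to see that $D$ is an associative (hence Lie) derivation, and then use that $E=L-D$ takes values in $Z(A)$ so that the commutators $[E(x),y]$ and $[x,E(y)]$ vanish. The only cosmetic difference is that you phrase it as ``$E$ is a Lie derivation with central values,'' while the paper computes $L([x,y])=D([x,y])$ directly; the content is identical.
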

\begin{proof}
Since the  \ $D$ is an associative derivation (Lemma
\ref{lemma8}), it follows that \ $D$ \ is also a Lie derivation,
moreover,
 \ $E(x)=L(x)-D(x)\in Z(A)$ \ for all \ $x\in A$. \
Therefore, for any  \ $x,y\in A$ \ we have
$$L([x,y])=[D(x)+E(x),y]+[x,D(y)+E(y)]=[D(x),y]+[x,D(y)]=D([x,y]),$$
which implies that  \ $E([x,y])=0$.

\end{proof}

Now we are ready to establish the standard  decomposition for the
Lie derivation $L$ in the case, when  von Neumann   algebra \ $M$ \ has
no direct commutative summands.

\begin{thm}
\label{thm2} Let\ $M$ be a von Neumann algebra without direct
commutative summands and let \ $A$ be a solid $\ast$-subalgebra in $LS(M)$, containing $M$. Then any Lie derivation $L$ on $A$ is  of the form \ $L = D + E,$ \ where $D$ is an associative
derivation on \ $A$ \ and \ $E$ \ is a trace on \ $A$ \ with values in \ the center
$Z(A)$.
\end{thm}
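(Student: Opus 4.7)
The plan is to assemble the preceding lemmas into the required decomposition. First, using the hypothesis that $M$ has no direct commutative summand, I would invoke \cite[Task 6.1.9]{KR} to choose a projection $p \in \mathcal{P}(M)$ with $c(p) = c(\mathbf{1} - p) = \mathbf{1}$ and $p \preceq \mathbf{1} - p$. Lemma \ref{lemma3} then gives $L(p) = [p, a] + z$ for some $a \in A$ and $z \in Z(A)$.

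Next, I would reduce to the hypothesis of Lemmas \ref{lemma4} and \ref{lemma5} by subtracting an inner derivation. Observe that the inner derivation $D_{-a}(x) := [-a, x]$ satisfies $D_{-a}(p) = pa - ap = [p, a]$, so that $\widetilde{L} := L - D_{-a}$ is a Lie derivation on $A$ with $\widetilde{L}(p) = z \in Z(A)$. Lemmas \ref{lemma4} and \ref{lemma5} then yield $\widetilde{L}(S_{ij}) \subset S_{ij}$ for $i \neq j$ and $\widetilde{L}(S_{ii}) \subset S_{ii} + Z(A)$ for $i = 1, 2$. The choice of $p$ with full central support forces $p_i A p_i \cap Z(A) = \{0\}$ (as already argued in the paragraph preceding the definition (3.5)), so the decomposition of $\widetilde{L}$ on each $S_{ii}$ is unique. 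Hence the map $D$ given by formula (3.5), together with $E(x) := \widetilde{L}(x) - D(x) = z(x_{11}) + z(x_{22})$, is well-defined and linear on $A = \sum_{i,j} S_{ij}$.

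At this point Lemma \ref{lemma8} shows that $D$ is an associative derivation on $A$, and Corollary \ref{cor1} shows that $E([x,y]) = 0$ for all $x, y \in A$. Because $E$ takes values in $Z(A)$, the identity $E(xy) - E(yx) = 0$ is precisely the defining property of a center-valued trace. So $\widetilde{L} = D + E$ is the required standard decomposition of $\widetilde{L}$.

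To conclude, I would undo the initial reduction: $L = \widetilde{L} + D_{-a} = (D + D_{-a}) + E$. Since $a \in A$ and $MAM \subset A$, the inner derivation $D_{-a}$ maps $A$ into $A$; the sum $D' := D + D_{-a}$ is therefore again an associative derivation on $A$, and the center-valued trace $E$ is unchanged. This gives $L = D' + E$ in the required form. I do not expect a substantive obstacle at this stage, as all the heavy lifting has been carried out in Lemmas \ref{lemma1}--\ref{lemma8} and Corollary \ref{cor1}; the only real care is in tracking signs when passing between $L$ and $\widetilde{L}$ and in verifying that the projection supplied by \cite[Task 6.1.9]{KR} simultaneously achieves $c(p) = c(\mathbf{1}-p) = \mathbf{1}$ and $p \preceq \mathbf{1} - p$, which is exactly the content of that exercise for an algebra with no commutative direct summand.
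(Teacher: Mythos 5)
Your proposal is correct and follows essentially the same route as the paper: reduce to the case $L(p)\in Z(A)$ by subtracting the inner derivation coming from Lemma \ref{lemma3} (your $\widetilde{L}=L-D_{-a}$ is exactly the paper's $L_{1}=L+D_{a}$), then invoke the construction (3.5), Lemma \ref{lemma8} and Corollary \ref{cor1}, and finally absorb the inner derivation back into the associative part. The only cosmetic difference is that the paper treats the case $L(p)\in Z(A)$ separately before performing the reduction, which your argument handles uniformly.
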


\begin{proof}
 Since von Neumann algebra \ $M$ \ has no direct
commutative summands, there exists a projection  \ $p\in
\mathcal{P}(M)$\ such that $c(p)=c(\mathbf{1}-p) =\mathbf{1}$ \
(~\cite[Task 6.1.9]{KR}). Assume first that \ $L(p)\in Z(A)$. \
In this case, according to Lemma 8 and Corollary \ref{cor1},\  we
have that \ $L=D+E$, \, where \ $D$ is an associative derivation
on \ $A$ \ and \ $E$ \ is a trace on \ $A$ \  with values in \
$Z(A)$.

Now let \ $L(p)\bar{\in} Z(A)$. \ By  Lemma \ref{lemma3}, the
equality \ $L(p)=[p,a]+z$ \ holds for some \ $a\in A$ \ and \
$z\in Z(A)$. Consider on \ $A$ \ the  inner derivation \
$D_{a}(x)=[a,x], \ x\in A$,  and let\  $L_{1}=L+D_{a}$. \ It is
clear that  \ $L_{1}$ \ is a Lie derivation  on \
$A$, \ moreover
$$
L_{1}(p)=L(p)+D_{a}(p)=[p,a]+[a,p]+z=z\in Z(A).
$$
By Lemma \ref{lemma8} \ and  Corollary  \ref{cor1} \ the Lie
derivation \ $L_{1}$ \ has the standard form, i.e. \
$L_{1}=D_{1}+E_{1}$, \ where \ $D_{1}$ is an associative
derivation on \ $A$ \ and \ $E_{1}$ \ is a $Z(A)$-valued trace on \
$A$. \ Therefore, \
$$L=L_{1}-D_{a}=D_{1}-D_{a}+E_{1}.$$
It is clear, that  \ $D=D_{1}-D_{a}$ \ is an associative
derivation on \ $A$, \ and consequently
 the equality \ $L=D+E_{1}$ \
completes the proof of Theorem \ref{thm2}.
\end{proof}
We now consider the case of an arbitrary von Neumann algebra  \
$M$.  We need the following

\begin{lemma}
\label{lemma9}
 Let \ $M$ \ be a arbitrary von Neumann \ and \ let
$A$ be a $\ast$-subalgebra in $\ LS(M)$. \ If \  $L$ is a Lie
derivation on  \ $A$ \ then \ $L(Z(A)) \subset  Z(A)$.
\end{lemma}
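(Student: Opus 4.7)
The plan is to exploit the Lie derivation identity directly, using only the defining property of $Z(A)$. The statement does not require anything about the structure of $LS(M)$ or about solidity, which suggests the argument is purely algebraic and very short.

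Concretely, fix $z \in Z(A)$ and let $y \in A$ be arbitrary. Since $z$ is central, $[z,y] = 0$. Apply the Lie derivation identity to the pair $(z,y)$:
\[
0 \;=\; L(0) \;=\; L([z,y]) \;=\; [L(z), y] + [z, L(y)].
\]
Now $L(y) \in A$ because $L$ maps $A$ to $A$, and $z \in Z(A)$, so $[z, L(y)] = 0$. This leaves $[L(z), y] = 0$. Since $y \in A$ was arbitrary, $L(z)$ commutes with every element of $A$, i.e.\ $L(z) \in Z(A)$.

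There is essentially no obstacle here: the only non-formal input is the fact that $L$ takes values in $A$ (which is built into the definition of a Lie derivation on $A$), and the whole argument is a single line of computation. For this reason I would present the proof in a few lines without invoking any of the earlier machinery (Lemmas~\ref{lemma1}--\ref{lemma8}, the decomposition $A = \sum_{i,j} S_{ij}$, or the hypothesis that $M$ has no commutative summand); the result genuinely holds for any $*$-subalgebra $A \subset LS(M)$ and any Lie derivation on it.
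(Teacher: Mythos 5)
Your proof is correct and is essentially identical to the paper's: both fix $z\in Z(A)$, apply $L$ to $[z,y]=0$, use $[z,L(y)]=0$ to conclude $[L(z),y]=0$ for all $y$, hence $L(z)\in Z(A)$. Nothing further is needed.
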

\begin{proof}
 If \ $z\in Z(A), \ a \in A,$ then \
$[z,a]=0=[z,L(a)]$, \ and consequently \
$[L(z),a]=L([z,a])-[z,L(a)]=0$, i.e. \ $L(z)a=aL(z)$, \ which
implies  that \ $L(z)\in Z(A)$.
\end{proof}

In the von Neumann algebra $M$ we consider the central projection
$$
z_{0}=sup\{z \in \mathcal{P}(Z(M)):zM \subset Z(M)\}.
$$
It is clear, that \ $M_{0}:=z_{0}M=z_{0}Z(M)$ \ is a commutative von
Neumann algebra, and von Neumann algebra
$M_{1}=(\mathbf{1}-z_{0})M$ \ has no direct commutative summands,
moreover \ $M=M_{1}\bigoplus M_{0}$.

Let \ $A$ \ be a solid $\ast$-subalgebra in
  \ $LS(M)$, containing $M$, \
$z_{1}=\mathbf{1}-z_{0}, \ A_{1}=z_{1}A$ \ and \ let \ $L$ \ be a Lie
derivation on $A$.

\begin{lemma}
\label{lemma10} $(i)$. \ $L_{1}(x):=z_{1}L(x), \ x \in A_{1}$ \ is
a Lie derivation on the  solid $\ast$-subalgebra \ $A_{1}$ \ in \
$LS(M_{1})$;

$(ii)$. \ The linear mappings  \ $F_{1}(x):=z_{0}L(z_{1}x), \
F_{2}(x)=z_{1}L(z_{0}x)$ \ and \ $F_{3}(x)=z_{0}L(z_{0}x)$ \ are
$Z(A)$-valued traces on  \ $A$;

$(iii)$. \ If $E_{1}$ is a $Z(A_{1})$-valued traces on  \ $A_{1}$,
\ then \ $E(x):=E_{1}(z_{1}x), \ x\in A$ \ is a \ $Z(A)$-valued
traces on  \ $A$;

$(iv)$. \ If \ $D_{1}$ is an associative  derivation on \
$A_{1}$, \ then \ $D(x):=D_{1}(z_{1}x), \  x\in A$ \ is an associative
derivation on $A$.
\end{lemma}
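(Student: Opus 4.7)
The plan is to dispatch the four parts in order, leaning throughout on two crucial facts: the central projections $z_0,z_1\in Z(M)\subset Z(A)$ are orthogonal with sum $\mathbf{1}$, and $z_0 A\subset Z(A)$. The second fact follows because $M_0=z_0 Z(M)$ is commutative, so $LS(M_0)=z_0 LS(M)$ is commutative; hence for $a\in A$ and $b\in z_0A$ we have $ab=z_0 ab=(z_0 a)b=b(z_0 a)=ba$. In particular, by Lemma \ref{lemma9}, $L(z_0 x)\in Z(A)$ for every $x\in A$.

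For (i), I would first note that $A_1=z_1 A$ is a $\ast$-subalgebra of $LS(M_1)=z_1 LS(M)$ and is solid there because $M_1 A_1 M_1=z_1 M A M z_1\subset z_1 A=A_1$. Linearity of $L_1$ is obvious. To verify the Lie identity, for $x,y\in A_1$ (so $z_1 x=x$, $z_1 y=y$) use centrality of $z_1$ to compute
\[
L_1([x,y])=z_1\bigl([L(x),y]+[x,L(y)]\bigr)=[z_1 L(x),y]+[x,z_1 L(y)]=[L_1(x),y]+[x,L_1(y)].
\]

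For (ii), I would treat the three mappings together. Linearity is clear. The image of each $F_i$ lies in $z_0 A\cup z_1 Z(A)\subset Z(A)$: for $F_2,F_3$ this is immediate from the observation $L(z_0 x)\in Z(A)$ above; for $F_1$, one has $F_1(x)\in z_0 A\subset Z(A)$ directly. For the trace property, write $[x,y]$ inside each $F_i$ and use $z_0 z_1=0$ together with the Lie identity:
\[
F_1([x,y])=z_0 L([z_1 x,y])=[z_0 L(z_1 x),y]+[z_0 z_1 x,L(y)]=0,
\]
where the first bracket vanishes because $z_0 L(z_1 x)\in z_0 A\subset Z(A)$. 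For $F_2$, $z_1 L(z_0 [x,y])=[z_1 L(z_0 x),y]+[z_1 z_0 x,L(y)]$; the first bracket vanishes because $L(z_0 x)\in Z(A)$ by Lemma \ref{lemma9}, and the second by orthogonality. For $F_3$, simply observe that $z_0 x\in Z(A)$ makes $z_0[x,y]=[z_0 x,y]=0$.

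For (iii), the key structural input is $Z(A_1)\subset Z(A)$: if $z\in Z(A_1)$ then $z=z_1 z$, and for any $a\in A$, $za=z\cdot z_1 a=(z_1 a)z=az$ (using $z_0 z=0$ and the central-in-$A_1$ property applied to $z_1 a\in A_1$). Then $E$ lands in $Z(A)$, is linear, and satisfies $E(xy)=E_1((z_1 x)(z_1 y))=E_1((z_1 y)(z_1 x))=E(yx)$ via the identity $z_1 xy=(z_1 x)(z_1 y)$ and the trace property of $E_1$.

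For (iv), the same factorization $z_1 xy=(z_1 x)(z_1 y)$ gives
\[
D(xy)=D_1(z_1 x)\,z_1 y+z_1 x\,D_1(z_1 y),
\]
and since $D_1(z_1 x)\in A_1=z_1 A$ is annihilated by $z_0$, centrality of $z_1$ yields $D_1(z_1 x)\,z_1 y=D_1(z_1 x)\,y=D(x)y$ and similarly $z_1 x\,D_1(z_1 y)=x\,D(y)$. There is no real obstacle here; the whole lemma is essentially bookkeeping that exploits the central splitting $\mathbf{1}=z_0+z_1$ together with Lemma \ref{lemma9}. The only place that requires care is (ii), where one must recognize which term vanishes for which of the three reasons (image in $Z(A)$, orthogonality $z_0 z_1=0$, or centrality of $z_0 x$).
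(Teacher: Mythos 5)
Your proof is correct and follows essentially the same route as the paper's: the same direct computations with the central splitting $\mathbf{1}=z_0+z_1$, the inclusions $z_0A\subset Z(A)$ and $Z(A_1)\subset Z(A)$ (which you justify in slightly more detail than the paper does), and Lemma \ref{lemma9} for $F_2$. The only cosmetic difference is that in (ii) the paper kills both brackets in $F_1([x,y])$ purely by the orthogonality $z_0z_1=0$ after writing $z_1[x,y]=[z_1x,z_1y]$, whereas you invoke centrality of $z_0L(z_1x)$ for one of them; both are valid.
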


\begin{proof}
$(i)$. If $x,y\in A_{1}$, then
$$
L_{1}([x,y])=z_{1}L([x,y])=z_{1}([L(x),z_{1}y]+[z_{1}x,L(y)])=[L_{1}(x),y]+[x,L_{1}(y)].
$$

$(ii)$. It is clear, that \
$$
F_{1}(x)\in z_{0}A \subset LS(M_{0})\bigcap A
\subset Z(A).
$$
If \ $x,y\in A$, \ then \ $z_{1}[x,y]=[z_{1}x,z_{1}y]$ \ and \
from equality \ $z_{0}z_{1}=0$ \ it follows that
$$F_{1}([x,y])=
z_{0}(L([z_{1}x,z_{1}y])=z_{0}([L(z_{1}x), \
z_{1}y]+[z_{1}x,L(z_{1}y)])=0,
$$
i. e. \ $F_{1}(xy)=F_{1}(yx)$.

Since \ $z_{0}x\in Z(A)$, \ by Lemma \ref{lemma9}, we have that
$F_{2}(x)=z_{1}L(z_{0}x)\in Z(A)$. \ Moreover, from \
$z_{0}x,z_{0}y\in Z(A)$ \ it follows that $[z_{0}x,z_{0}y]=0$ \
and
$$
F_{2}([x,y])=z_{1}L(z_{0}[x,y])=z_{1}L([z_{0}x,z_{0}y]=0,
$$
i. e. \ $F_{2}(xy)=F_{2}(yx)$. \ Similarly we can show that  \
$F_{3}(xy)=F_{3}(yx)$.

$(iii)$. For any  \ $x,y\in A$ \ we have  \ $E(x)\in
Z(A_{1})\subset Z(A)$ \ and \
$$E(xy)=E_{1}(z_{1}xy)=E_{1}((z_{1}x)(z_{1}y))=E_{1}((z_{1}y)(z_{1}x))=E_{1}(z_{1}yx
)=E(yx).$$

$(iv)$. Since \ $z_{1}D_{1}(a)=D_{1}(a)$ \ for any \ $a\in A_{1}$ \  then
$$D(xy)=D_{1}(z_{1}xy)=D_{1}((z_{1}x)(z_{1}y))=D_{1}(z_{1}x)(z_{1}y)+z_{1}xD_{1}(z_{1}y)=
$$$$D_{1}(z_{1}x)y+xD_{1}(z_{1}y)=D(x)y+xD(y)$$
for all  \ $x,y\in A$.
\end{proof}

Now we will prove Theorem \ref{thm1}. \ If \ $x\in A$, \ then
using  notations of Lemma 10, we obtain
$$L(x)=z_{1}L(z_{1}x)+z_{1}L(z_{0}x)+z_{0}L(z_{1}x)+z_{0}L(z_{0}x)=$$ $$L_{1}(z_{1}x)+F_{2}(x)+F_{1}(x)+F_{3}(x).$$

By Theorem  \ref{thm2} and Lemma \ref{lemma10} $(i)$ \ there exist
an associative  derivation\ $D_{1}$ \ on \ $A_{1}$ \ and a \ $Z(A_{1})$-valued
trace \ $E_{1}$ \ on \ $A_{1}$\ such that\
$L_{1}(z_{1}x)=D_{1}(z_{1}x)+E_{1}(z_{1}x)$ \ for all \ $x\in A$.

By Lemma \ref{lemma10} $(iii)$, $(iv)$, \ the mapping  \
$D(x)=D_{1}(z_{1}x)$ \ is an associative derivation on \ $A$, \ and a mapping  \
$E(x)=E_{1}(z_{1}x)$ \ is a $Z(A)$-valued trace on \ $A$. \ Thus,
$$L(x)=D(x)+E(x)+F_{1}(x)+F_{2}(x)+F_{3}(x) \ \ \ \ \ \ \ \ \ (3.11)$$
for all \ $x\in A$. \ Since  \ $E+F_{1}+F_{2}+F_{3}$ \ is a \
$Z(A)$-valued trace on  \ $A$ \ (see Lemma \ref{lemma10} $(ii)$),
 the equality (3.11) completes the proof of Theorem 3.1.
\hfill $\Box$

\textbf{4. Lie derivations on the  $EW^{*}$ algebras.}

In this section we give applications of Theorem \ref{thm1} to the
description of Lie derivations on $EW^{*}$-algebras. The class of
$EW^{*}$-algebras (extended $W^{*}$-algebras) was introduced in
\cite{Dix} for the purpose of description of  $\ast$-algebras of
unbounded closed operators, which are "similar" \ to
$W^{*}$-algebras by their algebraic and order properties.

Let $\mathcal{A}$ be a set of closed, densely defined operators on
the Hilbert space $H$ which is a unital \ $\ast$-algebra with the
identity \ $\mathbf{1}$ \ equipped with the strong sum, strong
product, the scalar multiplication and the usual adjoint of
operators.

A $\ast$-algebra \ $\mathcal{A}$ \  is said to be \ $EW^*$-algebra
 if the following conditions hold:

$(i)$. $(\mathbf{1}+x^*x)^{-1}\in\mathcal{A}$ \ for every \
$x\in\mathcal{A}$;

$(ii)$. A $\ast$-subalgebra \ $\mathcal{A}_b$ \ of bounded
operators in
 \ $\mathcal{A}$ \ is a \ von Neumann subalgebra in $B(H)$.

In this case, a $\ast$-algebra \ $\mathcal{A}$ \ is said to be
 \ $EW^*$-algebra over von Neumann algebra (or over \ $W^*$-algebra) \ $\mathcal{A}_b$.

The meaningful connection between \ $EW^{*}$-algebras \ $A$ \ and
solid subalgebras of \ $LS(A_{b})$ \ is given in \cite{CZ}.
 It is clear that every solid
$\ast$-subalgebras $A$ in $LS(M)$ with $M\subset A$ is an
$EW^{*}$-algebra and $A_{b}=M.$ The converse implication is given
in \cite{CZ}, where it is established that every $EW^{*}$-algebra
$A$ with the bounded part $A_{b}=M$ is a solid $\ast$-subalgebra
in the $\ast$-algebra $LS(M)$, \ that is $LS(M)$ is the greatest
$EW^{*}$-algebra of $EW^{*}$-algebras with the bounded part
coinciding with $M$.

Therefore,  the
study of derivations in
 � \ $EW^{*}$-algebras is reduced to the study of derivations on solid  $\ast$-subalgebras $A \subset LS(M)$ \ with \ $M \subset A$.

In the case when the bounded part $A_{b}$ of an $EW^{*}$-algebra
$A$ is a property infinite $W^{*}$-algebra we have that any
derivation $\delta:A\rightarrow LS(A_{b})$ is inner \cite[Theorem 5]{BCS2}.

 Now let $M$ be an arbitrary  von Neumann algebra, and $A$ be an  $EW^{*}$-algebra, where \
$A_{b}=M$. \ In this case, as it is mentioned above,  \ $A$ \ is
a solid  $\ast$-subalgebra in  \ $LS(M)$ \ and \ $M\subset A$.  By Theorem 1, any Lie derivation  \ $L$, \ defined on  \ $EW^{*}-$ algebra \ $A$, \ has a standard form  \
$L=D+E$, \ where \ $D:A\rightarrow A$ \ is an associative
derivation, � \ $E:A\rightarrow Z(A)$ \ is a center-valued trace
on \ $A$. \ 

If a von Neumann algebra \ $A$ is properly infinite,
then as it is mentioned above, the derivation \ $D$ \ is inner, i.e. \ $D(x)=[a,x]=D_{a}(x)$ \ for all \ $x\in A$ \ and some a fixed
\ $a\in A$. \ Thus we have the following
\begin{thm}
\label{thm3}
If the bounded part \ $A_{b}$ \ of an \ $EW^{*}$-
algebra \ $A$ \ is a properly infinite \ $W^{*}$-algebra, then every
Lie derivation \ $L$ \ on \ $A$ is equal to \ $D_{a}+E$, \ where \ $a \in A$ \
and \ $E:A \rightarrow Z(A)$ \ is a central-valued trace.
\end{thm}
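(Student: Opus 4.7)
The plan is to combine two results that are already in place: the general standard-form decomposition established in Theorem~\ref{thm1}, and the inner-derivation theorem for properly infinite bounded parts recalled from \cite[Theorem~5]{BCS2}. Once these two ingredients are invoked in the right order, the conclusion is essentially immediate; the bulk of the work for this theorem has already been done in the preceding sections.

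First I would use the identification from \cite{CZ}: since $A$ is an $EW^*$-algebra with $A_b = M$, the algebra $A$ is a solid $*$-subalgebra of $LS(M)$ containing $M$. This places $A$ squarely in the hypothesis of Theorem~\ref{thm1}, so the given Lie derivation $L$ on $A$ decomposes as
$$
L(x) = D(x) + E(x), \qquad x \in A,
$$
where $D : A \to A$ is an associative derivation and $E : A \to Z(A)$ is a center-valued trace.

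Second, I would invoke the properly-infinite hypothesis on $A_b = M$. By \cite[Theorem~5]{BCS2}, every associative derivation from $A$ into $LS(M)$ is inner, so the derivation $D$ obtained above is implemented by some element $a$ in the sense that $D(x) = [a,x] = D_a(x)$ for all $x \in A$. Substituting back into the decomposition of $L$ then yields $L = D_a + E$, which is precisely the claimed formula.

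The only delicate point, and the main (mild) obstacle, is to ensure that the implementing element $a$ can be chosen inside $A$ rather than only inside $LS(M)$. Here one uses that $D$ maps $A$ into $A$, together with the fact that $A$ is solid in $LS(M)$, to promote the $LS(M)$-valued inner derivation produced by \cite{BCS2} to an $A$-valued one with $a \in A$; this is either built into the statement of the cited theorem or follows by a straightforward adjustment of $a$ using the solid structure. Beyond this verification the argument is a direct concatenation of Theorem~\ref{thm1} and \cite[Theorem~5]{BCS2}, with no additional analytic work required.
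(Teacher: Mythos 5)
Your proposal matches the paper's own argument exactly: the paper likewise identifies $A$ as a solid $*$-subalgebra of $LS(M)$ containing $M$ via \cite{CZ}, applies Theorem~\ref{thm1} to get $L=D+E$, and then invokes \cite[Theorem 5]{BCS2} for properly infinite $A_b=M$ to conclude $D=D_a$. The one ``delicate point'' you flag --- that the implementing element $a$ lies in $A$ rather than merely in $LS(M)$ --- is simply asserted without comment in the paper, so your explicit attention to it is if anything more careful than the original.
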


Let \ $Z$ \ be a commutative von Neumann algebra, let \ $H_{n}$ \ be an
$n$-dimensional complex Hilbert space and let \ 
$M_{n}=B(H_{n})\bigotimes Z$ \ be a homogeneous von Neumann
algebra of type $I_{n}$. The \ von Neumann algebra \ $M_{n}$ \ is
$\ast$-isomorphic to an \ $\ast$-algebra \ $Mat(n,Z)$ \ off all
$n\times n$-matrix \ $(a_{ij})_{i,j=1}^{n}$ with entries  \
$a_{ij} \in Z$. \  Since \ $M_{n}$  is a finite von Neumann
algebra, then  \ $LS(M_n)=S(M_n)$, \ wherein,
 $\ast$-algebra \ $S(M_n)$ \ is identified with the  \ $\ast$-algebra \
$Mat(n,S(Z))$ \ of all \ $n\times n$- matrix with entries from \
$S(M)$ \ (see [1]) .  \ If \ $e_{ij}$ \
$\left(i,j=1,2,...,n\right)$ \ is the matrix unit of \
$Mat(n,S(Z))$, \ then every element  \ $x \in Mat(n,S(Z))$ \ has
the form
 $$
x=\sum_{i,j=1}^{n}\lambda_{ij}e_{ij}, \ \lambda_{ij}\in
S(Z), \ i,j=1,2,..,n.
 $$
 For any derivation  \ $\delta:S(Z)\to S(Z)$ \ the linear operator
 $$
D_{\delta}\bigg(\sum_{i,j=1}^{n}\lambda_{ij}e_{ij}\bigg)=\sum_{i,j=1}^{n}\delta(\lambda_{ij})e_{ij}
\eqno(4.1)
 $$
is a derivation on  \  $S(M_n)=Mat(n,S(Z))$ \  and the restriction of \ $D_\delta$ \ on the center  \ $ Z(S(M_{n}))=S(Z)$ \ coincides with
 $\delta$. \ In the case when a commutative von Neumann algebra \ $Z$ \ has no atoms
    there exists an  uncountable set of mutually different derivations \
$\delta: S(Z) \rightarrow S(Z)$ \ (see \cite{Ber}). Consequently, in this
case, there exists an uncountable family of mutually different
derivations of the form \ $D_{\delta}$
 on the algebra \ $ Mat(n,S(Z))=S(M)$.\

Now let $M$ be an arbitrary finite von Neumann algebra of type I
with the center $Z$.  \ There exists a family \ $\{z_n\}_{n \in F},$ \ $F \subseteq \mathbb N,$ of mutually orthogonal central
projections from \ $M$ \ with \ $\sup\limits_{n\in
F}z_n=\mathbf{1}$ \ such that the algebra \ $M$ is $\ast$
-isomorphic to the direct sum of the von Neumann algebras  \ $z_n
M$ \ of type  $I_n, \ n \in F$, \ i. e.
$$
M \cong \sum_{n\in F} \bigoplus z_nM,
$$
and \
$$
z_{n}M=B(H_{n})\bigotimes
L^{\infty}(\Omega_{n},\Sigma_{n},\mu_{n}) \cong
Mat(k_{n},L^{\infty}(\Omega_{n},\Sigma_{n},\mu_{n}))=
$$
$$
= \{(a_{ij})_{i,j=1}^{n}:
a_{ij}\in L^{\infty}(\Omega_{n},\Sigma_{n},\mu_{n})\}
$$
where  \ $k_{n}=dim(H_{n})<\infty, \ (\Omega_{n},\Sigma_{n},\mu_{n})$
\ is a Maharam measure space, \ $n \in \mathbb N$. According to
Proposition 1.1  from \cite{AAK}, we have that
$$
LS(M)\cong \prod\limits_{n\in F}LS(z_nM),
$$
and \
$$
LS(z_{n}M) \cong Mat(k_{n},L^{0}(\Omega_{n},\Sigma_{n},\mu_{n}))=
$$
$$
= \{(\lambda_{ij})_{i,j=1}^{k_{n}}:
\lambda_{ij}\in
L^{0}(\Omega_{n},\Sigma_{n},\mu_{n})\} \cong B(H_{n})\bigotimes
L^{0}(\Omega_{n},\Sigma_{n},\mu_{n}),
$$
where \ $L^{0}(\Omega_{n},\Sigma_{n},\mu_{n})$ \ is \ an $\ast$-algebra of
all complex measurable functions on \
$(\Omega_{n},\Sigma_{n},\mu_{n})$ \ (equal almost everywhere
functions are identified).

Suppose that $D$ is a derivation on $LS(M)$ and $\delta$ is the
restriction of  $D$ onto the center $S(Z)$. The restriction of the
derivation $\delta$ onto $z_{n}S(Z)$ defines  the derivation
$\delta_n$ on $z_nS(Z)$ for each  $n\in F$.

Let $D_{\delta_{n}}$ be a  derivation on the matrix algebra
$M_{k_{n}}(z_nZ(LS(M)))\cong LS(z_n M)$, defined by the formula
(4.1). Set
$$
D_{\delta}(\{x_n\}_{n\in F})=\{D_{\delta_{n}}(x_n)\}, \ \
\{x_n\}_{n\in F}\in \prod\limits_{n\in F} S(z_nM)\cong LS(M).
\eqno(4.2)
$$

It is clear that  $D_{\delta}$ is a derivation on the algebra
 $LS(M).$

If $M$ is an arbitrary von Neumann algebra of type $I,$
 then there exists a  central projections $z_0 \in M$, such
 that
 \ $z_0 M$  is a finite  von Neumann algebra and
 \ $z_0^{\bot}M$   is a properly infinite algebra.

Consider  derivation $D$ on $LS(M)$ and by $\delta$ denote the
restriction of  $D$ onto the center $Z(LS(M)).$ By Theorem 2.7
\cite{AAK}, the derivation $z_{0}D$ is an inner derivation on
$z_{0}LS(M)=LS(z_{0}M),$ moreover, \ $z_{0}^{\bot}\delta=0$, \ i.
e. \ $\delta=z_0\delta$. \ Let \ $D_{\delta}$ \ be a derivation on
  $z_0 LS(M)$, \ of the form (4.2). Consider the expanding \
$D_{\delta}$ \ onto  \ $LS(M)=z_0 LS(M)\bigoplus z_0^{\bot}LS(M)$,
\ defined as
$$
D_{\delta}(x_1+x_2):=D_{\delta}(x_1), \ \ x_1 \in z_0 LS(M), \ \
x_2 \in z_0^{\bot}LS(M).  \eqno (4.3)
$$

By \cite[Theorems 2.8, 3.6]{AAK}, \  any derivation \ $D$ \ on the
algebra \ $LS(M)$ \ (respectively on the algebra \ $S(M)$) \ can
be uniquely represented as a sum \ $D=D_{a}+D_{\delta}$,  where \
$a$ is a fixed element of \ $LS(M)$ \  (respectively of \ $S(M)$).
 \  This fact and  Theorem 1 imply the following result
\begin{cor}
\label{cor2} Let $M$ be a type $I$ von Neumann algebra. \ Then any
Lie derivationon on  the algebra \ $LS(M)$ \ (respectively on the
algebra \ $S(M)$) has the form
$$
L=D_a+D_\delta+E,
$$
where \ $D_a$  is an inner derivation, \ $D_{\delta}$ is a
 derivation given by (4.3), generated by the
derivation \ $\delta$ \ in the center of \ $LS(M)$ \ (respectively
of \ $S(M)$) and \ $E$  is a center-valued trace on \ $LS(M)$ \
(respectively on \ $S(M)$).
\end{cor}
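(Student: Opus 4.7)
The strategy is to apply the main theorem of the paper (Theorem \ref{thm1}) to peel off the center-valued trace piece, and then invoke the structural results of \cite{AAK} to decompose the remaining associative derivation.

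First, I would note that $LS(M)$ is itself a solid $\ast$-subalgebra of $LS(M)$ containing $M$, and likewise for $S(M)$ (since $M \subset S(M) \subset LS(M)$ and $S(M)$ is a $\ast$-subalgebra closed under left and right multiplication by elements of $M$). Hence Theorem \ref{thm1} applies directly to any Lie derivation $L$ on $LS(M)$ (respectively $S(M)$), producing an associative derivation $D$ and a center-valued trace $E : LS(M) \to Z(LS(M))$ (respectively $E : S(M) \to Z(S(M))$) such that
\[
L(x) = D(x) + E(x), \qquad x \in LS(M) \text{ (resp.\ } S(M)).
\]

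Next, since $M$ is of type $I$, I would invoke \cite[Theorems 2.8, 3.6]{AAK}, as quoted in the paragraph preceding the corollary: the associative derivation $D$ admits a unique decomposition
\[
D = D_a + D_\delta,
\]
where $a$ is a fixed element of $LS(M)$ (respectively $S(M)$), $D_a$ is the corresponding inner derivation, and $D_\delta$ is the derivation constructed via (4.3) from $\delta := D|_{Z(LS(M))}$ (respectively $\delta := D|_{Z(S(M))}$). Here I would briefly recall that to build $D_\delta$ one first writes $M$ via its type decomposition $M \cong z_0 M \oplus z_0^\perp M$ with $z_0 M$ finite and $z_0^\perp M$ properly infinite, restricts $\delta$ to each homogeneous summand $z_n(z_0 M)$, lifts via the matrix formula (4.1) to get $D_{\delta_n}$ on $LS(z_n z_0 M) \cong \mathrm{Mat}(k_n, L^0(\Omega_n, \Sigma_n, \mu_n))$, assembles these into a derivation on $z_0 LS(M)$ by (4.2), and finally extends by zero on $z_0^\perp LS(M)$ as prescribed by (4.3).

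Substituting yields
\[
L = D_a + D_\delta + E,
\]
which is exactly the claimed form. No further work is needed: the associative/trace decomposition comes from Theorem \ref{thm1}, and the inner/central splitting of the associative part comes verbatim from \cite{AAK}. There is essentially no obstacle here; the only mild point to verify is that both Theorem \ref{thm1} and the AAK results apply to $S(M)$ as well as $LS(M)$, which is immediate because $S(M)$ is itself a solid $\ast$-subalgebra of $LS(M)$ containing $M$, and the AAK description of derivations is stated for both algebras.
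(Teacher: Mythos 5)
Your proposal is correct and follows exactly the route the paper intends: the paper derives Corollary~\ref{cor2} by combining Theorem~\ref{thm1} (applied to the solid $*$-subalgebras $LS(M)$ and $S(M)$) with the decomposition $D=D_a+D_\delta$ from \cite[Theorems 2.8, 3.6]{AAK}, which is precisely your argument. No discrepancies to note.
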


\end{document}